\newcommand{\rar}{\rightarrow}
\newcommand{\llar}{-\kern-5pt-\kern-5pt\longrightarrow}
\newcommand{\Hilb}{\text{Hilb}}
\newtheorem{Theorem}{Theorem} 
\newtheorem{Lemma}[Theorem]{Lemma}
\newtheorem{cor}[Theorem]{Corollary}
\newtheorem{prop}[Theorem]{Proposition}
\newtheorem{Remark}[Theorem]{Remark}
\newtheorem{Example}[Theorem]{Example}
\newtheorem{defi}[Theorem]{Definition}
\DeclareMathOperator{\Spec}{Spec}
\renewcommand{\P}{\mbox{P}}
\newcommand{\CC}{\mathcal{C}}
\newcommand{\OO}{\mathcal{O}}
\newcommand{\h}{\mathcal{H}}
\def\codim{{\rm codim}\,}
\def\restr{{\kern-1pt\restriction\kern-1pt}}
\def\P{{\mathbb P}}
\def\Z{{\mathbb Z}}
\keywords{Complete intersection, Hilbert Scheme, moduli space, biprojective space.}
\subjclass[2010]{14H22, 14H10,14C05, 14D20}
\begin{document}

\title{On Hilbert scheme of complete intersection on the biprojective}

\author{Aislan Leal Fontes}
\address{Departamento de Matem\'atica, UFS - Campus Itabaiana. Av. Vereador Ol\'impio Grande s/n, 49506-036 Itabaiana/SE, Brazil}
\email{aislan@ufs.br}
\author{Maxwell da Paixão de Jesus Santos}
\address{Departamento de Matem\'atica, UFS - Campus Itabaiana. Av. Vereador Ol\'impio Grande s/n, 49506-036 Itabaiana/SE, Brazil}
\email{maxwell.agro70@yahoo.com.br}

\maketitle

\begin{abstract}
The goal of this paper is to construct the Hilbert scheme of complete intersections in the biprojective 
space \( X = \mathbb{P}^m \times \mathbb{P}^n \). For this purpose, we define a partial order on the 
bidegrees of the bihomogeneous forms. As a consequence of this construction, we provide an explicit computation of the Hilbert 
scheme for curves of genus 7 and 8 listed in \cite{MUK95} and \cite{MUKIDE03} that are complete 
intersections. Finally, we construct the coarse moduli space of canonical complete intersections in \( \mathbb{P}^m \times \mathbb{P}^n \).
\end{abstract}


\tableofcontents
\section{Introduction}

We recall that a variety $Y \subset \P^n$ is a \textbf{complete intersection in $\P^n$} if $Y$ is generated by the homogeneous polynomials $F_1, \dots, F_c$ and satisfies $0 < \dim Y = n-c$. We can generalize this definition as follows: a subvariety $Y \subset X$ is a \textbf{complete intersection} in $X$ if it is the intersection of $c = \codim(Y, X)$ hypersurfaces in $X$, where $X$ is a smooth, arithmetically Cohen--Macaulay projective variety.  
We are particularly interested in complete intersections in biprojective spaces $X = \P^{m} \times \P^{n}$.

In \cite{MUK95} and \cite{MUKIDE03}, Mukai introduced an interesting stratification 
for $\mathcal{M}_7$ and $\mathcal{M}_8$, where in both cases some strata are complete intersections 
in biprojective spaces (see Example~\ref{ex13}); this example serves as the starting point 
for our study of complete intersections with certain properties.

The second section examines the cohomology of complete intersections. A key distinction between the standard and biprojective cases is that, unlike $\mathbb{P}^n$, not all complete intersections in $X$ are arithmetically Cohen-Macaulay (ACM) as for example, the curve of type $(2,0)$ in $\mathbb{P}^1\times\mathbb{P}^1$. We say that a set of bi-homogeneous polynomials $F_1, \dots, F_c$ forms a \emph{regular sequence} if, for every subset $F_{i_1}, \dots, F_{i_\alpha}$, the corresponding subscheme is both a complete intersection and ACM under the Segre embedding. Throughout, we assume that the complete intersection $Y \subset X$ is defined by such a regular sequence; when no ambiguity arises, we simply say $Y$ is ACM.  A deeper analysis of ACM complete intersections is reserved for Section~3; here, we establish foundational results on their Hilbert polynomials and flat families, such properties depend crucially on the ACM condition.


In Section~4, our goal is to understand the Hilbert scheme $\mathcal{H}$ of complete intersections with a fixed order in~$X$. To achieve this, we apply the tools of~\cite{OB12}, where the Hilbert scheme is constructed as a tower of Grassmannians. Unlike in projective space, in biprojective space~$X$, we cannot define a total order on the bidegrees of bihomogeneous forms but only a partial order. Consequently, we adapt the theorems for standard complete intersections in~$X$. Additionally, we compute the dimension of the Hilbert scheme for the complete intersections listed by Mukai in~\cite{MUK95} and~\cite{MUKIDE03}.

Finally, we apply our results to study the moduli space $\mathcal{M}$ of canonical complete intersections in the biprojective space, more precisely, we construct the scheme $\mathcal{M}$ as the quotient of~$\mathcal{H}$ by the automorphism group of~$X$. As an application, we explicitly compute the dimension of $\mathcal{M}$ for the cases of complete intersections classified by Mukai in~\cite[Table~1]{MUK95} and ~\cite{MUKIDE03}. The advantage of our approach lies in providing a direct and explicity calculation of the dimension of $\mathcal{M}$.

\section{On the Cohomology of Complete Intersection in $\P^{m}\times \P^{n}$}
Let \( R := k[x_0, \dots, x_m, y_0, \dots, y_n] \) be the polynomial ring with coefficients in \( k \). A monomial  
\( G = x_0^{a_0}\cdots x_m^{a_m}y_0^{b_0}\cdots y_n^{b_n} \in R \) has bidegree \( (\sum a_i, \sum b_i) \). Let \( R_{i,j} \) be the  
\( r \)-dimensional vector space over \( k \) spanned by all monomials of bidegree \( (i,j) \), where  
\( r = \dim R_{i,j} = \binom{m + i}{m} \binom{n + j}{n} \). A polynomial \( F \in R \) is  
bihomogeneous of bidegree \( (d_1,d_2) \) if \( F \) is a \( k \)-linear combination of monomials of bidegree  
\( (d_1,d_2) \). We also say that \( F \) is a form of bidegree \( (d_1,d_2) \).

Let us consider the variety $X = \P^{m} \times \P^{n}$, where the inclusion $X \subset \P^{r}$ is the Segre embedding. Throughout this section, we assume $m \leq n$ and $r = (m+1)(n+1) - 1$.

\begin{defi}
\textup{Let $D_i = \OO_X(a_i, b_i)$ be effective divisors on $X$ and take $F_i \in H^{0}(X, D_i) \setminus \{0\}$, where $a_i + b_i > 1$. We say that $Y = \bigcap_{i=1}^{c} V(F_i)$ is a complete intersection in $X$ if $\dim Y = m + n - c$, where $0 \leq c < n + m$.}  
\end{defi}

We denote $Y_{j} = \bigcap_{i=1}^{j} V(F_i)$, where $Y_0 = X$ and $Y_c = Y$. Thus, if $Y$ is a complete intersection, then each $Y_j$ is a complete intersection.

A scheme $Y \subset \P^r$ is said to be \emph{Arithmetically Cohen-Macaulay (ACM)} if its coordinate ring is a Cohen-Macaulay ring. Unlike the projective space case, not all complete intersections in $X$ are ACM. For example, let $Y = V(x_0^2)$ be the curve of bidegree $(2,0)$ in $\P^1 \times \P^1$ under the Segre embedding in $\P^3$. In fact, the coordinate ring of $Y$ is $R = k[z_0, z_1, z_2, z_3]/(z_0z_3 - z_1z_2, z_0^2, z_0z_1, z_1^2)$, where $\dim_{\text{Krull}} R = 2$ and $z_2$ is a maximal regular sequence in $R$; hence, $R$ is not Cohen-Macaulay. An equivalent definition of an ACM variety is
\[ H^{i}(Y, \OO_Y(d)) = 0 \text{ for every } 1 \leq i \leq \dim Y - 1, \text{ or } H^{i}(Y, \mathcal{I}_Y(d)) = 0, \forall \, 1 \leq i \leq \dim Y. \]

\begin{defi}
\textup{Let $Y = \bigcap_{i=1}^{c} V(F_i)$ be a complete intersection in $X$. We say that $F_1, \dots, F_c$ form a regular sequence on $X$ if for every subset of indices $i_1 < i_2 < \cdots < i_{\alpha}$, the scheme $\bigcap_{r=1}^{\alpha} V(F_{i_r})$ is ACM.}    
\end{defi}

By the Künneth formula,
\[ h^{i}(X, \OO_X(a,b)) = \sum_{r+s=i} h^{r}(\P^{m}, \OO_{\P^{m}}(a)) \cdot h^{s}(\P^{n}, \OO_{\P^{n}}(b)), \]
we summarize the cohomology of $X$ in Table \ref{T1}.

\begin{table}[h]
\begin{tabular}{|l|l|l|l|l|}
\hline
 & $h^{0}(\OO_X(a,b))$    & $h^{m}(\OO_X(a,b))$ & $h^{n}(\OO_X(a,b))$ &  $h^{m+n}( \OO_X(a,b))$ \\ \hline
$a \geq 0$, $b \geq 0$ & ${m+a \choose m}\cdot{n+b \choose n}$ & 0 & 0 & 0 \\ \hline
  $\begin{matrix}
b  \geq  0,\\
a  \leq  -m-1
\end{matrix}$ & 0 & ${n+b \choose n}\cdot{-1-a \choose m}$ & 0 & 0 \\ \hline
 $\begin{matrix}
a  \geq  0,\\
b  \leq  -n-1
\end{matrix}$ &  0 &  0 & ${m+a \choose m}\cdot{-1-b \choose n}$ & 0  \\ \hline
$\begin{matrix}
a  \leq  -m-1,\\
b  \leq  -n-1
\end{matrix}$ & 0 & 0 &  0 &  ${-1-a \choose n}\cdot{-1-b \choose n}$ \\ \hline
\end{tabular}
\caption{Cohomology of $\P^{m} \times \P^{n}$.}
\label{T1}
\end{table}
\begin{prop}{\label{K}}
Let $Y=\bigcap_{i=1}^{c}V(F_i)$ be a closed subscheme of $X$. If $(F_1,\dots,F_c)$ is a regular sequence, then:
    \begin{enumerate}
        \item For all $0\leq j \leq c$, $Y_j := \bigcap_{i=1}^j V(F_i)$ is a complete intersection in $X$ of codimension $j$;
        \item $Y$ is generated by a regular sequence;
        \item The dualizing sheaf of $Y$ is $\omega_Y \cong \OO_Y\left(\sum a_i - m-1,\sum b_i-n-1\right)$.
    \end{enumerate}
\end{prop}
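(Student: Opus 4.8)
The plan is to prove the three parts in order, since (3) relies on (2). Part~(1) is essentially the definition: applied to the index set $\{1,\dots,j\}$, the hypothesis that $(F_1,\dots,F_c)$ is a regular sequence says that $Y_j=\bigcap_{i=1}^{j}V(F_i)$ is a complete intersection (and ACM) in $X$; to pin down that its codimension is exactly $j$, note that every $V(F_i)$ is an effective Cartier divisor, so passing from $Y_j$ down to $Y_c=Y$ lowers dimension by at most $c-j$, giving $\dim Y_j\le m+n-j$, while Krull's principal ideal theorem applied on $X$ gives $\dim Y_j\ge m+n-j$. Hence each $Y_j$ is a complete intersection of pure codimension $j$; in particular all $Y_j$ are equidimensional.

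For Part~(2) I would use that $X=\P^m\times\P^n$ is arithmetically Cohen--Macaulay, so its homogeneous coordinate ring $S$ under the Segre embedding is Cohen--Macaulay. Proceed by induction on $j$, the claim being that $F_1,\dots,F_j$ cut out $Y_j$ scheme-theoretically, that $\mathcal I_{Y_j}$ is generated by these data viewed as a regular sequence, and that $S_{Y_j}$ is Cohen--Macaulay. For the inductive step: since $Y_{j-1}$ is ACM (hence has no embedded components) and $F_j$ does not vanish on any component of $Y_{j-1}$ (by Part~(1)), one has the short exact sequence
\[
0\lar \OO_{Y_{j-1}}(-a_j,-b_j)\stackrel{\cdot F_j}{\lar}\OO_{Y_{j-1}}\lar \OO_{Y_j}\lar 0,
\]
so $F_j$ is a non-zero-divisor on $\OO_{Y_{j-1}}$, and the ACM hypothesis on $Y_j$ then forces the defining ideal to be saturated, i.e.\ $\mathcal I_{Y_j}=(F_1,\dots,F_j)$. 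I expect this to be the main obstacle, and the place where the argument genuinely departs from the projective-space treatment in \cite{OB12}: in $\P^m\times\P^n$ the ideal generated by the $F_i$ can be strictly smaller than its saturation — exactly the pathology of the bidegree-$(2,0)$ curve in $\P^1\times\P^1$ discussed earlier — so one cannot conclude $\mathcal I_{Y_j}=(F_1,\dots,F_j)$ by a dimension count alone, and must genuinely invoke the ACM condition on every $Y_j$ to control the interplay between the scheme-theoretic intersection $\bigcap V(F_i)$, the ideal it generates, and its saturation.

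For Part~(3), by~(2) each inclusion $Y_j\hookrightarrow X$ is a regular (local complete intersection) embedding of codimension $j$ cut out by a regular sequence of sections of line bundles; in particular $Y_j$ is Gorenstein, $\omega_{Y_j}$ is invertible, and the conormal sheaf is $\mathcal I_{Y_j}/\mathcal I_{Y_j}^2\cong\bigoplus_{i=1}^{j}\OO_{Y_j}(-a_i,-b_i)$. I would then run adjunction along the filtration $X=Y_0\supset Y_1\supset\cdots\supset Y_c=Y$. The base case is $\omega_X\cong\OO_X(-m-1,-n-1)$, which comes from $\omega_{\P^m\times\P^n}=\pr_1^{*}\omega_{\P^m}\otimes\pr_2^{*}\omega_{\P^n}$ (or is read off Table~\ref{T1}). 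Since $Y_j$ is an effective Cartier divisor on $Y_{j-1}$ in the class $\OO_{Y_{j-1}}(a_j,b_j)$ with $F_j$ a non-zero-divisor there, adjunction on the Cohen--Macaulay scheme $Y_{j-1}$ gives $\omega_{Y_j}\cong\omega_{Y_{j-1}}\otimes\OO_{Y_{j-1}}(a_j,b_j)\otimes\OO_{Y_j}$; iterating from the base case yields $\omega_Y\cong\OO_Y\left(\sum_{i=1}^{c}a_i-m-1,\ \sum_{i=1}^{c}b_i-n-1\right)$. Equivalently, one may argue in a single step via $\omega_Y\cong\omega_X|_Y\otimes\det N_{Y/X}$ together with the conormal computation. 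I do not expect this last part to pose any difficulty once (2) is in hand.
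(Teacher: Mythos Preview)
Your proposal is correct and follows the same route as the paper. The paper's proof is extremely terse: for (1) it simply notes that subsequences of regular sequences are regular, for (3) it writes down the tail of the Koszul resolution of $\mathcal{I}_{Y,X}$ and applies $\omega_Y\cong\omega_X\otimes\det\mathcal{N}_{Y/X}$---exactly the single-step adjunction you mention as the alternative to your inductive divisor argument. Your treatment of (2), where you worry about saturation and invoke the ACM hypothesis to ensure $\mathcal{I}_{Y_j}=(F_1,\dots,F_j)$, is more careful than the paper, which silently identifies the ideal sheaf with the image of the Koszul map; both arrive at the same conclusion by the same mechanism.
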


\begin{proof}
Every subsequence of a regular sequence is regular; thus, each $Y_j$ is a complete intersection. The regularity condition ensures that $Y_j$ has the expected codimension $j$.

From the Koszul complex, we obtain the exact sequence:
  
\begin{equation*}\label{eq7}
    \centering
\begin{tikzcd}
 \displaystyle\bigoplus_{i<j}\OO_{X}(-a_i-a_j,-b_i-b_j) \rar & \displaystyle\bigoplus_{i}\OO_{X}(-a_i,-b_i)  \rar &  \mathcal{I}_{Y,X} \rar & 0, \\
\end{tikzcd}
\end{equation*}
where $\mathcal{I}_{Y,X}$ is the ideal sheaf of $Y$ in $X$. Therefore,
$$\omega_Y \cong \omega_X \otimes (\det \mathcal{N}_{Y/X}) \cong \OO_Y\left(\sum a_i - m-1,\sum b_i-n-1\right),$$
where $\mathcal{N}_{Y/X}$ is the normal sheaf of $Y$ in $X$.
\end{proof}

\begin{prop}{\label{P10}} Let $Y$ be a complete intersection in $\mathbb{P}^m \times \mathbb{P}^n$ such that either $m > 1$, or $m = 1$ and $a_i \leq b_i$ for all $i >1$. Then, $Y$ is generated by a regular sequence if and only if, 
$$(a_{i_1}-b_{i_1}+\cdots+a_{i_{\alpha}}-b_{i_{\alpha}})<m+1$$ 
and 
$$(b_{j_1}-a_{j_1}+\cdots+b_{{j_\beta}}-a_{j_{\beta}})< n+1$$ 
for every $i_1<i_2<\cdots<i_{\alpha}$ and $j_1<j_2<\cdots <j_{\beta}$.
\end{prop}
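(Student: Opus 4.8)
The plan is to characterize the ACM condition for a single hypersurface and each partial intersection $Y_j$ via the cohomological criterion stated above, and then to track how the vanishing of intermediate cohomology propagates through the Koszul resolution of $Y_j$. First I would analyze the case of one divisor $D=\OO_X(a,b)$: from the short exact sequence $0\to\OO_X(-a,-b)\to\OO_X\to\OO_{V(F)}\to 0$ and Table~\ref{T1}, the hypersurface $V(F)$ fails to be ACM precisely when some twist $\OO_X(d-a,d-b)$ contributes to $H^m$ or $H^n$ of $X$, i.e.\ when $-a\le -m-1$ while $-b\ge 0$ for a suitable choice making the product of binomials nonzero; unwinding this gives exactly $a-b\ge m+1$ (forcing $H^m\neq 0$ in some twist) or $b-a\ge n+1$ (forcing $H^n\neq 0$). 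So the single-hypersurface case already produces the two inequalities $a-b<m+1$ and $b-a<n+1$.

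Next I would set up the inductive step. Assume $Y_{j-1}$ is ACM and consider $0\to\mathcal{I}_{Y_{j-1}}(-a_j,-b_j)\to\mathcal{I}_{Y_{j-1}}\to\mathcal{I}_{Y_j/Y_{j-1}}\to 0$ on $X$, or equivalently multiplication by $F_j$ on the coordinate ring. The Koszul complex of $(F_1,\dots,F_j)$ resolves $\OO_{Y_j}$ by sums of line bundles $\OO_X(-\sum_{k\in S}a_k,-\sum_{k\in S}b_k)$ over subsets $S\subseteq\{1,\dots,j\}$. Breaking this resolution into short exact sequences and chasing cohomology, $Y_j$ is ACM iff none of these twisted line bundles, after any further twist by $\OO_X(t,t)$, contributes intermediate cohomology in a way that survives to $H^i(\OO_{Y_j}(t))$ for $1\le i\le\dim Y_j-1$. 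The terms that can obstruct are exactly those $\OO_X(-\sum_{k\in S}a_k+t,-\sum_{k\in S}b_k+t)$ lying in the $H^m$ or $H^n$ region of Table~\ref{T1}; the $H^m$ region requires $-\sum_{k\in S}a_k+t\le -m-1$ and $-\sum_{k\in S}b_k+t\ge 0$, which for some integer $t$ is solvable iff $\sum_{k\in S}a_k-\sum_{k\in S}b_k\ge m+1$, and symmetrically for $H^n$. This is where the hypothesis on $m$ enters: when $m>1$ the regions $H^m$ and $H^n$ are genuinely intermediate (neither equals $H^0$ nor $H^{m+n}$), so their vanishing is both necessary and sufficient; when $m=1$, $H^m=H^1$ may coincide with the top-but-one in a degenerate way, and the extra assumption $a_i\le b_i$ for $i>1$ is what rules out the problematic $H^1$ contributions from the $\P^1$-factor, keeping the argument symmetric.

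For the converse direction I would argue that if all the stated inequalities hold, then for every subset $S$ both $\sum_{k\in S}a_k-\sum_{k\in S}b_k\le m$ and $\sum_{k\in S}b_k-\sum_{k\in S}a_k\le n$, so every twisted line bundle appearing in every truncation of every Koszul subcomplex has vanishing intermediate cohomology by Table~\ref{T1}; feeding this into the long exact sequences inductively shows each $Y_j$ (hence every partial intersection indexed by an arbitrary increasing tuple, by the same computation applied to that sub-Koszul-complex) is ACM, which is the definition of a regular sequence. The main obstacle I anticipate is the bookkeeping in the inductive cohomology chase: one must verify that an obstructing term in a twisted Koszul resolution cannot be cancelled by the differentials — i.e.\ that the relevant connecting maps are not surjective/injective in the bad degrees — rather than merely appearing in the complex. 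Handling this cleanly will likely require splitting the Koszul complex into short exact sequences one step at a time and using that $Y_{j-1}$ is already ACM to kill all but one potentially nonzero cohomology group at each stage, so that the obstruction for $Y_j$ is detected by a single explicit binomial-coefficient product being nonzero.
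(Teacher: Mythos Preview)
Your approach is essentially the paper's: both do the base case $c=1$ by reading off Table~\ref{T1}, then run the Koszul resolution with induction on $c$. For the forward implication you and the paper argue identically that every line bundle in the Koszul complex avoids the $H^m$ and $H^n$ strips of Table~\ref{T1}. For the converse, the paper handles the non-cancellation issue exactly as you anticipate: by induction on $c$, every proper subset already satisfies the inequalities, so all Koszul terms except the last one $\OO_X(-\sum a_i,-\sum b_i)$ have vanishing intermediate cohomology in every twist, and the chase reduces the ACM condition to a statement about this single term.

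One point where your diagnosis of the $m=1$ hypothesis is slightly off: the issue is not that $H^1$ ``coincides with the top-but-one in a degenerate way.'' After the chase, the ACM vanishing $H^j(\mathcal{I}_{Y,X}(d))=0$ for $1\le j\le m+n-c$ forces vanishing of the last Koszul term only in cohomological degrees $\ge 2$, so $H^1$ of that term is never recovered from the ACM hypothesis regardless of $m$. When $m>1$ this is harmless because $H^1(\OO_X(p,q))=0$ for all $(p,q)$ by Table~\ref{T1}. When $m=1$, the paper does not try to extract $H^1=0$ from the cohomology chase at all; it simply notes that the standing hypothesis $a_i\le b_i$ for $i>1$, combined with $a_1-b_1<2$ from the $c=1$ case, gives $\sum_i(a_i-b_i)<2$ directly, which is the desired inequality. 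So in your write-up, use the hypothesis to verify the inequality \emph{a priori} rather than to repair the spectral-sequence bookkeeping.
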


\begin{proof}
Let \( Y = \bigcap_{i=1}^{c} V(F_i) \) be a complete intersection in \( X \). Let us prove the statement by induction on \( c \).  

If \( c = 1 \), then the ideal sheaf of \( Y \) in \( X \) is isomorphic to \( \OO_X(-a_1, -b_1) \). For every \( d \in \mathbb{Z} \), consider the exact sequence  
\begin{equation}\label{se1}  
\centering  
\begin{tikzcd}  
0 \rar & \OO_X(d-a_1, d-b_1) \rar{\cdot F_1} & \OO_X(d, d) \rar & \OO_{Y_1}(d, d) \rar & 0,  
\end{tikzcd}  
\end{equation}  
and observe that \( F_1 \) is regular if and only if  
\[ h^{i}(X, \OO_X(d-a_1, d-b_1)) = 0 \text{ for all } 1 \leq i \leq m+n-1, \]  
which is equivalent to  
\[ h^{i}(X, \OO_X(d-a_1, d-b_1)) = 0 \text{ for all } i \in \{m, n\} \iff (a_1 - b_1) < m+1 \text{ and } (b_1 - a_1) < n+1. \]  

Now suppose \( c > 1 \). Consider the bigraded Koszul complex below:  
\begin{equation}{\label{k1}}  
\centering  
\begin{tikzcd}  
0 \rar & \OO_X\left(\sum_{i=1}^{c} (-a_i, -b_i)\right) \rar & \cdots \rar \ar[draw=none]{d}[name=X, anchor=center]{}  
& \bigoplus_{i<j} \OO_X(-a_i - a_j, -b_i - b_j) \ar[rounded corners,  
to path={ -- ([xshift=2ex]\tikztostart.east)  
|- (X.center) \tikztonodes  
-| ([xshift=-2ex]\tikztotarget.west)  
-- (\tikztotarget)}]{dll}[at end]{} \\  
& \bigoplus_{i} \OO_X(-a_i, -b_i) \rar & \mathcal{I}_{Y,X} \rar & 0,  
\end{tikzcd}  
\end{equation}  
where \( \mathcal{I}_{Y,X} \) is the ideal sheaf of \( Y \) in \( X \). Suppose  
\[  
\begin{matrix}  
(a_{i_1} - b_{i_1} + \cdots + a_{i_\alpha} - b_{i_\alpha}) & < & m+1 \\  
& \text{and} & \\  
(b_{j_1} - a_{j_1} + \cdots + b_{j_\beta} - a_{j_\beta}) & < & n+1  
\end{matrix}  
\]  
for every \( i_1 < i_2 < \cdots < i_\alpha \) and \( j_1 < j_2 < \cdots < j_\beta \).  
Then, for all \( d \in \mathbb{Z} \) and \( 1 \leq j \leq n + m - 1 \), we have  
 $$h^{j}(\OO_X(\sum_{i=1}^{c} (d - a_i, d - b_i))) = \cdots = \displaystyle\sum_{i} h^{j}(\OO_X(d - a_i, d - b_i)) = 0.$$  
Thus, \( h^{j}(\mathcal{I}_{Y,X}(d)) = 0 \) for all \( d \in \mathbb{Z} \) and \( 1 \leq j \leq n + m - c \), i.e., \( Y \) is generated by a regular sequence.

Conversely, if \( Y \) is generated by a regular sequence, then by induction, we only need to show that  
\[
\sum_{i=1}^{c}(a_i - b_i) < m + 1 \quad \text{and} \quad \sum_{i=1}^{c}(b_i - a_i) < n + 1.
\]  
Tensoring the bigraded Koszul complex \eqref{k1} by \( d \in \mathbb{Z} \), we obtain  
\[
H^{j}(\mathcal{I}_{Y,X}(d)) = H^{j+1}(\mathcal{O}_X(d - \sum_{i=1}^{c} a_i, d - \sum_{i=1}^{c} b_i))
\]  
for all \( j \in \{1, \dots, m + n - c\} \), and these cohomology groups vanish by hypothesis. The only remaining case to check is the vanishing of  
\[
H^{1}(\mathcal{O}_X(d - \sum_{i=1}^{c} a_i, d - \sum_{i=1}^{c} b_i)).
\]  
If \( m > 1 \), this follows directly from Table \ref{T1}. If \( m = 1 \), then by assumption,  
\[
\sum_{i=1}^{c}(a_i - b_i) < 2,
\]  
so the cohomology group vanishes in this case as well.
\end{proof}

It is important to note that there are many cases of complete intersections generated by a regular sequence to be studied. The simplest example is the curves of bidegree $(a,b)$ on a smooth quadric in $\mathbb{P}^3$, where $|a-b|<2$. Here are some interesting examples:

\begin{Example}{\label{ex13}}
Let $\mathcal{C}$ be a smooth curve of genus 8. From \cite[(i) and (ii) of the Theorem]{MUKIDE03}, we have:
\begin{enumerate}
    \item If $\mathcal{C}$ is a general curve and has a non-selfadjoint $g^2_7$, then $\mathcal{C}$ is a complete intersection of divisors of bidegree $(1,1)$, $(1,2)$, and $(2,1)$ in $\mathbb{P}^{2} \times \mathbb{P}^{2}$;
    \item If $\mathcal{C}$ has a $g^1_4$ but no $g^2_6$, then $\mathcal{C}$ is the complete intersection of four divisors of bidegree $(1,1)$, $(1,1)$, $(0,2)$, and $(1,2)$ in $\mathbb{P}^{1} \times \mathbb{P}^{4}$.
\end{enumerate}

Let $\mathcal{C}$ be a smooth curve of genus 7. From \cite[Table 1]{MUK95}, we have:
\begin{enumerate}
    \item If $\mathcal{C}$ is trigonal and has a non-selfadjoint $g^2_6$, then $\mathcal{C}$ is a complete intersection of two divisors of bidegree $(1,1)$ and $(3,3)$ in $\mathbb{P}^1 \times \mathbb{P}^2$;
    \item If $\mathcal{C}$ is tetragonal and has no $g^2_6$, then $\mathcal{C}$ is isomorphic to a complete intersection of a divisor of bidegree $(1,1)$ and two divisors of bidegree $(1,2)$ in $\mathbb{P}^1 \times \mathbb{P}^3$;
    \item If $\mathcal{C}$ has gonality 4 and a non-selfadjoint $g^2_6$, then $\mathcal{C}$ is isomorphic to a complete intersection of three divisors of bidegree $(1,1)$, $(1,1)$, and $(2,2)$ in $\mathbb{P}^2 \times \mathbb{P}^2$.
\end{enumerate}
    \end{Example}


   \begin{cor}\label{cor11}
    Under the hypotheses of Proposition~\ref{P10}, the following hold:
    \begin{enumerate}[label=\textbf{C.\arabic*}]
        \item \label{itema1} The map $H^{0}(X, \OO_X(d)) \longrightarrow H^{0}(Y, \OO_Y(d))$ is surjective for every $d \in \Z$;
        \item \label{itema2} The map $H^{0}(\P^{r}, \OO_{\P^{r}}(d)) \longrightarrow H^{0}(Y, \OO_Y(d))$ is surjective for every $d \in \Z$;
        \item \label{itema3} The kernel $H^{0}(X, \mathcal{I}_{Y,X}(d))$ consists of bi-homogeneous polynomials of the form $F = \sum_{i} F_i H_i$, where $H_i \in H^{0}(X, \OO_X(d - a_i, d - b_i))$.
    \end{enumerate}
\end{cor}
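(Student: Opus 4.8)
The plan is to read off all three statements from the cohomology vanishing already obtained in the proof of Proposition~\ref{P10}. Recall from there two facts, valid for every $d\in\Z$: $h^{j}(X,\mathcal{I}_{Y,X}(d))=0$ for $1\le j\le m+n-c$, and (the displayed chain of equalities in that proof) $h^{j}\bigl(X,\OO_X(d-\sum_{i\in S}a_i,\,d-\sum_{i\in S}b_i)\bigr)=0$ for every $S\subseteq\{1,\dots,c\}$ and every $1\le j\le m+n-1$. Note also $\dim Y=m+n-c\ge 1$, since $c<m+n$.

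For \ref{itema1}: twisting $0\to\mathcal{I}_{Y,X}\to\OO_X\to\OO_Y\to 0$ by $\OO_X(d,d)$, surjectivity of $H^{0}(X,\OO_X(d))\to H^{0}(Y,\OO_Y(d))$ is equivalent to $H^{1}(X,\mathcal{I}_{Y,X}(d))=0$, which holds because $1\le m+n-c$. For \ref{itema2}: since $Y$ is ACM (it is generated by a regular sequence), $H^{1}(\P^{r},\mathcal{I}_{Y,\P^{r}}(d))=0$ for all $d$, and twisting $0\to\mathcal{I}_{Y,\P^{r}}\to\OO_{\P^{r}}\to\OO_Y\to 0$ by $\OO_{\P^{r}}(d)$ yields the surjection; alternatively one may factor the map through $H^{0}(X,\OO_X(d,d))$, using \ref{itema1} and the projective normality of the Segre embedding (every monomial of bidegree $(d,d)$ is a product of $d$ of the coordinate monomials $x_iy_j$). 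For $d<0$ the assertion is vacuous, both groups being zero by Table~\ref{T1} and \ref{itema1}.

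For \ref{itema3}: left-exactness identifies the kernel with $H^{0}(X,\mathcal{I}_{Y,X}(d))$, and the map $\bigoplus_i\OO_X(-a_i,-b_i)\to\mathcal{I}_{Y,X}$, $(H_i)_i\mapsto\sum_i F_iH_i$, is the first differential of the exact Koszul complex $0\to K_c\to\cdots\to K_1\to\mathcal{I}_{Y,X}\to 0$ with $K_q=\bigoplus_{i_1<\cdots<i_q}\OO_X(-\sum_t a_{i_t},-\sum_t b_{i_t})$; so it suffices to prove this map is onto on global sections after twisting by $(d,d)$. Split the complex into short exact sequences $0\to Z_q\to K_q\to Z_{q-1}\to 0$ with $Z_0=\mathcal{I}_{Y,X}$ and $Z_{c-1}=K_c$. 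Twisting by $(d,d)$, the connecting homomorphisms give injections $H^{1}(Z_1(d))\hookrightarrow H^{2}(Z_2(d))\hookrightarrow\cdots\hookrightarrow H^{c-1}(Z_{c-1}(d))=H^{c-1}(K_c(d))$, each step using $H^{q-1}(K_q(d))=0$, and the final group vanishes too. Every cohomological degree occurring lies in $\{1,\dots,c-1\}\subseteq\{1,\dots,m+n-1\}$, so all these Koszul-term cohomologies vanish by the second recalled fact. Hence $H^{1}(Z_1(d))=0$, the map $H^{0}(X,\bigoplus_i\OO_X(d-a_i,d-b_i))\to H^{0}(X,\mathcal{I}_{Y,X}(d))$ is onto, and this is precisely \ref{itema3}; the case $c=1$ is immediate since then $\mathcal{I}_{Y,X}\cong\OO_X(-a_1,-b_1)$.

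The one point requiring care is the degree bookkeeping in \ref{itema3}: one must check that no cohomological degree appearing in the syzygy chase exceeds $m+n-1$, which is exactly what the hypothesis $\dim Y=m+n-c>0$ guarantees. Granting this, the corollary is a formal consequence of the vanishing statements established for Proposition~\ref{P10}, together with the projective normality of the Segre variety.
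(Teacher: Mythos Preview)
Your proof is correct and follows essentially the same route as the paper's. The paper simply says that \ref{itema1} and \ref{itema3} ``follow from Proposition~\ref{P10}'' and then, for \ref{itema2}, factors the map through $H^{0}(X,\OO_X(d,d))$ using the explicit surjection from degree-$d$ monomials in the Segre coordinates to bidegree-$(d,d)$ monomials---exactly the projective normality argument you give as your second alternative. Your treatment is more explicit than the paper's, particularly the syzygy chase for \ref{itema3}, but the underlying mechanism (Koszul vanishing in degrees $1,\dots,m+n-1$ from the proof of Proposition~\ref{P10}) is identical.
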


    \begin{proof}
       Items~\ref{itema1} and~\ref{itema3} follow from Proposition~\ref{P10}. Let us now consider the exact sequence
\begin{equation*}  
\centering  
\begin{tikzcd}  
0 \rar & \mathcal{I}_{X} \rar & \OO_{\P^{r}} \rar & \OO_{X} \rar & 0,  
\end{tikzcd}  
\end{equation*}  
where the map $H^{0}(\P^{r}, \OO_{\P^{r}}(d)) \longrightarrow H^{0}(X, \OO_X(d))$ is given by sending a homogeneous monomial $Z_{00}^{i_{00}}Z_{01}^{i_{01}}\cdots Z_{mn}^{i_{mn}}$ of degree $d$ to a form $(X_0Y_0)^{i_{00}}(X_0Y_1)^{i_{01}}\cdots (X_mY_m)^{i_{mn}}$ of bidegree $(d,d)$. It is easy to see that the map is surjective; now, applying item \ref{itema1}, we complete the proof.
    \end{proof}

\begin{cor}{\label{cor12}}
  Let $Y$ be a complete intersection on $X \subset \mathbb{P}^{r}$. If $Y$ is generated by a regular sequence, then:
\begin{enumerate}
    \item[(a)] The scheme $Y$ is connected.
    \item[(b)] $Y \subset \mathbb{P}^{r}$ is degenerate if and only if $(a_i, b_i) = (1,1)$ for some $i$.
    \item[(c)] $Y$ is not contained in a global section of either $\mathcal{O}_X(1,0)$ or $\mathcal{O}_X(0,1)$.
\end{enumerate}
\end{cor}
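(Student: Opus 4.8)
The plan is to deduce all three statements from Corollary~\ref{cor11}, which (under the running hypotheses of Proposition~\ref{P10}) describes the global sections of $\mathcal{I}_{Y,X}$ in every bidegree explicitly. The elementary observation I would record first is this: since each $D_i$ is effective we have $a_i\ge 0$ and $b_i\ge 0$, and by assumption $a_i+b_i>1$; consequently $(1,1)$ is the only bidegree $(a,b)$ with $a,b\ge 0$, $a+b>1$, $a\le 1$ and $b\le 1$, and there is no bidegree $(a,b)$ with $a,b\ge 0$, $a+b>1$ and either $b=0$, $a\le 1$ or $a=0$, $b\le 1$. Together with Table~\ref{T1}, this is the entire arithmetic content of parts (b) and (c).

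For part (a) I would take $d=0$ in item~\ref{itema2}: the restriction map $k=H^{0}(\P^{r},\OO_{\P^{r}})\to H^{0}(Y,\OO_Y)$ is surjective, and being a nonzero ring homomorphism it is also injective, hence $H^{0}(Y,\OO_Y)=k$ and $Y$ is connected. (Equivalently, one may note that $Y$ is ACM of dimension $m+n-c\ge 1$, since $c<m+n$, and that an ACM closed subscheme of $\P^{r}$ of positive dimension is connected.)

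For parts (b) and (c) I would first record that, under the Segre embedding, $\OO_{\P^{r}}(1)|_X\cong\OO_X(1,1)$ and the restriction $H^{0}(\P^{r},\OO_{\P^{r}}(1))\to H^{0}(X,\OO_X(1,1))$ is an isomorphism: both spaces have dimension $(m+1)(n+1)$, and the map is injective because $X$ is nondegenerate in $\P^{r}$. Hence $Y\subset\P^{r}$ is degenerate if and only if $H^{0}(X,\mathcal{I}_{Y,X}(1,1))\neq 0$, and $Y$ is contained in the zero locus of a nonzero section of $\OO_X(1,0)$, respectively $\OO_X(0,1)$, if and only if $H^{0}(X,\mathcal{I}_{Y,X}(1,0))\neq 0$, respectively $H^{0}(X,\mathcal{I}_{Y,X}(0,1))\neq 0$. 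Then I would invoke item~\ref{itema3}: a section of $\mathcal{I}_{Y,X}$ of bidegree $(p,q)$ has the form $\sum_i F_iH_i$ with $H_i\in H^{0}(X,\OO_X(p-a_i,q-b_i))$, so its nonvanishing forces $H^{0}(X,\OO_X(p-a_i,q-b_i))\neq 0$, i.e.\ $a_i\le p$ and $b_i\le q$, for some $i$. With $(p,q)=(1,1)$ this forces $(a_i,b_i)=(1,1)$ by the opening observation; conversely, if $(a_i,b_i)=(1,1)$ for some $i$, then $F_i$ is itself a nonzero element of $H^{0}(X,\mathcal{I}_{Y,X}(1,1))$, so $Y$ is degenerate, and this proves (b). With $(p,q)=(1,0)$ or $(0,1)$ the constraint becomes $b_i=0$, $a_i\le 1$, respectively $a_i=0$, $b_i\le 1$, which contradicts $a_i+b_i>1$; hence those two cohomology groups vanish, and this proves (c).

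I do not expect a genuine obstacle here: the statement is essentially a bookkeeping consequence of Corollary~\ref{cor11}. The two points that deserve a moment of care are the Segre identification of the hyperplanes of $\P^{r}$ with the bidegree-$(1,1)$ divisors of $X$ (which is what turns ``degenerate'' into a statement about $H^{0}(X,\mathcal{I}_{Y,X}(1,1))$), and the consistent use of the hypothesis $a_i+b_i>1$: it is exactly this convention that forbids the bidegrees $(1,0)$ and $(0,1)$ among the $D_i$, and hence what makes part (c) true.
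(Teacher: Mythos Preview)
Your arguments for parts (a) and (b) are correct and essentially identical to the paper's: both use the exact sequence $0\to\mathcal{I}_{Y,X}\to\OO_X\to\OO_Y\to 0$ together with \ref{itema1}--\ref{itema3} at $d=0$ and $d=1$.

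For part (c), however, there is a genuine gap. Item~\ref{itema3} is stated only for the diagonal twist $\OO_X(d)=\OO_X(d,d)$, and you apply it to the bidegrees $(1,0)$ and $(0,1)$. The assertion that every section of $\mathcal{I}_{Y,X}(p,q)$ is of the form $\sum_i F_iH_i$ amounts to surjectivity of
\[
\bigoplus_i H^{0}\big(X,\OO_X(p-a_i,q-b_i)\big)\longrightarrow H^{0}\big(X,\mathcal{I}_{Y,X}(p,q)\big),
\]
and for non-diagonal $(p,q)$ this does not follow from Corollary~\ref{cor11}; it requires the vanishing of the higher cohomology of the Koszul terms in that specific bidegree. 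In other words, knowing that each $H^{0}(\OO_X(1-a_i,-b_i))=0$ does not by itself force $H^{0}(\mathcal{I}_{Y,X}(1,0))=0$: a priori a linear form in the $x_i$'s could lie in the \emph{saturation} of $(F_1,\dots,F_c)$ without lying in the ideal itself.

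The paper's proof of (c) is longer exactly because it supplies this missing vanishing. It proceeds inductively through the sequences
\[
0\to\OO_{Y_{j-1}}(-a_j,-b_j)\to\OO_{Y_{j-1}}\to\OO_{Y_j}\to 0
\]
twisted by $\OO_X(1,0)$ (resp.\ $\OO_X(0,1)$), reducing to $h^q\big(X,\OO_X(1-\sum a_{i_\ell},-\sum b_{i_\ell})\big)=0$ for $q\in\{0,m,n\}$, and then checks these vanishings case by case using the regular-sequence inequalities of Proposition~\ref{P10} together with $a_i+b_i>1$. Your shortcut would work if you first proved an analogue of \ref{itema3} in bidegrees $(1,0)$ and $(0,1)$, but that verification is precisely the content of the paper's argument.
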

\begin{proof}
By considering the exact sequence
\begin{equation*}
    \centering
    \begin{tikzcd}
        0 \rar & \mathcal{I}_{Y,X} \rar & \OO_{X} \rar & \OO_{Y} \rar & 0,
    \end{tikzcd}
\end{equation*}
it follows from \ref{itema1} and \ref{itema3} for $d=0$ that $k=H^{0}(X, \OO_X) \cong H^{0}(Y, \OO_Y)$, which means $Y$ is connected. The second item simply takes $d=1$.

For the third item, we consider the exact sequence
\begin{equation*}
    \centering
    \begin{tikzcd}
        0 \rar & \OO_{Y_{j-1}}(-a_j, -b_j) \rar & \OO_{Y_{j-1}} \rar & \OO_{Y_j} \rar & 0,
    \end{tikzcd}
\end{equation*}
and tensor it by $\OO_X(1,0)$ or $\OO_X(0,1)$. For all $j \geq 1$, we will show that
\[
H^{0}(Y_{j-1}, \OO_{Y_{j-1}}(1-a_j, -b_j)) = H^{0}(Y_{j-1}, \OO_{Y_{j-1}}(-a_j, 1-b_j)) = 0,
\]
or equivalently,
\[
h^q(X, \mathcal{F} \otimes \mathcal{O}_X(1,0)) = h^q(X, \mathcal{F} \otimes \mathcal{O}_X(0,1)) = 0,
\]
for all $q \in \{0, m, n\}$ and all $i_1 < \cdots < i_{m+1}$, where
\[
\mathcal{F} = \OO_X(-a_{i_1} - \cdots - a_{i_{m+1}}, -b_{i_1} - \cdots - b_{i_{m+1}}).
\]

Since $a_i + b_i > 1$, the case $q = 0$ is trivial. Let us now consider the case $q = m$. Without loss of generality, we may assume $m < n$. Take indices $i_1 < \cdots < i_m < i_{m+1}$. A necessary condition for
\[
h^{m}(X, \mathcal{F} \otimes \mathcal{O}_X(1,0))
\]
to be nonzero is $b_{i_1} = \cdots = b_{i_{m+1}} = 0$. In particular, $a_{i_p} \geq 2$ for all $p \in \{1, \dots, m, m+1\}$, which implies
\[
2m \leq a_{i_1} + \cdots + a_{i_m} < m + 1,
\]
a contradiction.

Similarly, a necessary condition for
$h^{m}(X, \mathcal{F} \otimes \mathcal{O}_X(0,1))$ to be nonzero is $b_{i_1} + \cdots + b_{i_{m+1}} \leq 1$. We may suppose 
$b_{i_{m+1}} = 1$ and $b_{i_1} = \cdots = b_{i_m} = 0$, which in turn implies $a_{i_p} \geq 2$ for all 
$p \in \{1, \dots, m\}$. Following the same reasoning as before, we obtain a contradiction. The case for the $n$-th cohomology vanishes analogously.

\end{proof}
\section{Complete  Intersection Arithmetically Cohen-Macaulay on $\P^m\times \P^n$}\label{sec4}

In the study of the standard complete intersection in $\P^{n}$ we always order the degrees of the homogeneous forms such 
that $d_{i} \leq d_{i+1}$. Unfortunately, we cannot get a total order for the bidegrees in $X$. We need a partial 
order such that the intermediate cohomologies vanish to replicate the theorems of standard complete intersections, but 
now in $X$. That is, there is an ordering $Y = \cap_{i=1}^{c}V(F_i)$, where the map 
\[
\varphi_j:H^{0}(X, \OO_X(a_{j},b_{j})) \longrightarrow H^{0}(Y_{j-1}, \OO_{Y_{j-1}}(a_{j},b_{j}))
\] 
is surjective for every $j$. To achieve this, we only need to show that  
\begin{equation}  
\begin{matrix}
h^{m}(X, \OO_{X}(a_{i_{\gamma}}-a_{i_{\alpha}}-\cdots-a_{i_{1}},b_{i_{\gamma}}-b_{i_{\alpha}}-\cdots-b_{i_{1}})) & = & 0 \\  
\text{and} \\  
h^{n}(X, \OO_{X}(a_{i_{\gamma}}-a_{i_{\alpha}}-\cdots-a_{i_{1}},b_{i_{\gamma}}-b_{i_{\alpha}}-\cdots-b_{i_{1}})) & = & 0,
\end{matrix}  
\label{eq5}
\end{equation}  
for every $i_1 < \cdots < i_m$ and for every $\gamma \not\in\{1, \cdots,\alpha\}$.  This is equivalent to the remark:
\begin{Remark}{\label{ord}}  
For every \( i_1 < \cdots < i_\alpha \) and for every \( \gamma \notin \{1, \dots, \alpha\} \), we have  
\begin{enumerate}  
    \item \(\sum_{j=1}^{\alpha} a_{i_j} < a_{i_\gamma} + m + 1\) or \( b_{i_\gamma} < \sum_{j=1}^{\alpha} b_{i_j} \);  
    \item \(\sum_{j=1}^{\alpha} b_{i_j} < b_{i_\gamma} + n + 1\) or \( a_{i_\gamma} < \sum_{j=1}^{\alpha} a_{i_j} \).  
\end{enumerate}  
\end{Remark}

Many complete intersections satisfy this relation. Indeed, every curve in Example~\ref{ex13} satisfies the 
conditions of Remark~\ref{ord}. 

We say a scheme $Y$ is a \emph{complete intersection ACM} (or simply \emph{ACM}) if it is defined by a regular sequence and its bidegree satisfies the criterion in Remark~\ref{ord}. This definition generalizes the one given by E.~Ballico in~\ref{EB22}. From now on, we are fixing the lexicographic order on the bidegrees of bihomogeneous forms. 

Let us consider
$(\alpha_1, \beta_1), \dots, (\alpha_s, \beta_s)$ be the distinct bidegrees 
among the pairs $(a_i, b_i)$, grouped as follows:

\begin{equation}
    \begin{matrix}{\label{o1}}
(a_1, b_1) &=& \cdots &= & (a_{m_1}, b_{m_1}) &=& (\alpha_1, \beta_1), \\
(a_{m_1 + 1}, b_{m_1 + 1}) &=& \cdots &= & (a_{m_1 + m_2}, b_{m_1 + m_2}) &=& (\alpha_2, \beta_2), \\
\vdots && \cdots && \vdots && \vdots \\
(a_{\sum_{i=1}^{s-1}m_i + 1}, b_{\sum_{i=1}^{s-1}m_i + 1}) &=& \cdots &= & (a_{\sum_{i=1}^{s}m_i}, b_{\sum_{i=1}^{s}m_i}) &=& (\alpha_s, \beta_s).
\end{matrix}
\end{equation}
Thus, when we say that \( Y \) is a \emph{complete intersection} of bidegrees \( (\alpha_1, \beta_1), \dots, (\alpha_s, \beta_s) \), we mean that its defining ideal is generated by \( m_i \) forms of bidegree \( (\alpha_i, \beta_i) \) for each \( 1 \leq i \leq s \). Explicitly, \( Y =V(F_1,\cdots,F_{m_1},F_{m_1+1},\cdots,F_{m_1+m_2},\cdots,F_s)\) .

\begin{prop}\label{prop15}
Let \( Y = \bigcap_{i=1}^{c} V(F_i) \) be an complete intersection ACM in \( X \). Then the following hold:
\begin{enumerate}
    \item For each \( j \), the restriction map 
    \[
    \varphi_j \colon H^{0}\big(X, \OO_X(a_j, b_j)\big) \to H^{0}\big(Y_{j-1}, \OO_{Y_{j-1}}(a_j, b_j)\big)
    \]
    is surjective, where \( Y_j = \bigcap_{i=1}^j V(F_i) \) and \( Y_0 = X \).

    \item The kernel of \( \varphi_j \), given by \( H^{0}\big(X, \mathcal{I}_{Y_{j-1}, X}(a_j, b_j)\big) \), consists of bihomogeneous polynomials of the form 
    \[
     F=\sum_{i} F_i G_i,
    \]
    where \( G_i \in H^{0}\big(X, \OO_X(a_j - a_i, b_j - b_i)\big) \).

    \item The kernel \( H^{0}\big(X, \mathcal{I}_{Y, X}(d)\big) \) depends only on the bidegrees \( (a_1, b_1), \dots, (a_c, b_c) \) and the integers \( d, c, m, n \).

    \item For every \( 1 \leq r \leq s \), the scheme \( Y \) is contained in a unique complete intersection \( X_{r-1} \) of bidegrees \( (\alpha_1, \beta_1), \dots, (\alpha_{r-1}, \beta_{r-1}) \), where \( X_{r-1} = V(F_{1}, \dots, F_{\mu_{r-1}}) \) and $\mu_{r-1}=m_1+\cdots+m_{r-1}$.
\end{enumerate}
\end{prop}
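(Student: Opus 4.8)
The plan is to prove the four statements together by induction on $c$, leaning on Proposition~\ref{P10} and Corollary~\ref{cor11} for the ACM input, and using the ordering condition of Remark~\ref{ord} exactly where the vanishing \eqref{eq5} is needed.

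\emph{Step 1 (surjectivity of $\varphi_j$).} First I would fix $j$ and write down, for the chain $X = Y_0 \supset Y_1 \supset \cdots \supset Y_{j-1}$, the short exact sequences
\[
0 \to \OO_{Y_{i-1}}(a_j - a_i, b_j - b_i) \to \OO_{Y_{i-1}}(a_j, b_j) \to \OO_{Y_i}(a_j, b_j) \to 0
\]
for $1 \le i \le j-1$. Chasing cohomology up this chain, surjectivity of $H^0(X, \OO_X(a_j,b_j)) \to H^0(Y_{j-1}, \OO_{Y_{j-1}}(a_j,b_j))$ reduces to the vanishing of $H^1$ of the twisted ideal sheaf, which by the Koszul resolution of $\mathcal{I}_{Y_{j-1},X}$ reduces in turn to
\[
h^m\big(X, \OO_X(a_j - a_{i_1} - \cdots - a_{i_\alpha},\, b_j - b_{i_1} - \cdots - b_{i_\alpha})\big) = h^n(\cdots) = 0
\]
for all $i_1 < \cdots < i_\alpha \le j-1$. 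This is precisely \eqref{eq5} with $\gamma = j$, which holds by Remark~\ref{ord}; here the lexicographic ordering of the bidegrees is what guarantees $\gamma \notin \{i_1,\dots,i_\alpha\}$ can be taken to be the current index $j$. Statement~(2) then comes for free: the kernel $H^0(X, \mathcal{I}_{Y_{j-1},X}(a_j,b_j))$ is, by the same Koszul complex together with the $H^1$-vanishing just established, exactly the image of $\bigoplus_i H^0(X, \OO_X(a_j-a_i, b_j-b_i)) \to H^0(X, \OO_X(a_j,b_j))$, i.e.\ the forms $F = \sum_i F_i G_i$.

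\emph{Step 2 (statement (3)).} I would observe that, iterating Step~1, $H^0(X, \mathcal{I}_{Y,X}(d))$ sits in an exact sequence built entirely from the bigraded Koszul complex of $(F_1,\dots,F_c)$ twisted by $\OO_X(d,d)$; every term appearing is a direct sum of $H^0(X, \OO_X(d - \sum_{k} a_{i_k}, d - \sum_k b_{i_k}))$, and by Table~\ref{T1} each such dimension $\binom{m + d - \sum a_{i_k}}{m}\binom{n + d - \sum b_{i_k}}{n}$ depends only on $m,n,d$ and the chosen bidegrees. Since all the connecting maps are forced to be the expected ones by the vanishing of the intermediate cohomology, the dimension (indeed the isomorphism class as a subspace determined combinatorially) of $H^0(X,\mathcal{I}_{Y,X}(d))$ depends only on $(a_1,b_1),\dots,(a_c,b_c)$ and $d,c,m,n$. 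I would phrase this carefully so that ``depends only on'' is interpreted at the level of dimension and the Koszul-syzygy description, not as a literal equality of subspaces.

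\emph{Step 3 (statement (4)).} Group the $F_i$ by distinct bidegree as in \eqref{o1}, set $\mu_{r-1} = m_1 + \cdots + m_{r-1}$ and $X_{r-1} = V(F_1,\dots,F_{\mu_{r-1}})$. Existence is immediate since $Y \subset X_{r-1}$. For uniqueness I would argue: if $Y \subset X'$ with $X'$ a complete intersection of the same bidegrees $(\alpha_1,\beta_1),\dots,(\alpha_{r-1},\beta_{r-1})$, then the $\mu_{r-1}$ forms cutting out $X'$ lie in $H^0(X, \mathcal{I}_{Y,X}(\alpha_k,\beta_k))$ in the appropriate bidegrees; by statement~(2) each such form is a combination $\sum_i F_i G_i$ with $G_i \in H^0(X, \OO_X(\alpha_k - a_i, \beta_k - b_i))$, and since the lexicographic ordering puts all forms of bidegree $\le (\alpha_k,\beta_k)$ among $F_1,\dots,F_{\mu_k}$, while $G_i$ must vanish unless $(a_i,b_i) \le (\alpha_k,\beta_k)$, the defining forms of $X'$ already lie in the ideal $(F_1,\dots,F_{\mu_{r-1}})$. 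Hence $X_{r-1} \subseteq X'$, and equality of codimension (both are complete intersections of the same bidegrees) forces $X' = X_{r-1}$.

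\emph{Main obstacle.} The delicate point is Step~3's uniqueness: one must be sure that ``a complete intersection of bidegrees $(\alpha_1,\beta_1),\dots,(\alpha_{r-1},\beta_{r-1})$'' containing $Y$ cannot use a \emph{different} subspace of generators — e.g.\ that no form of bidegree $(\alpha_k,\beta_k)$ vanishing on $Y$ escapes the span of $F_1,\dots,F_{\mu_k}$ modulo lower-bidegree multiples. This is where statement~(2) and the lexicographic convention must be invoked with care, checking that the multiplier bidegrees $(\alpha_k - a_i, \beta_k - b_i)$ are effective only when $(a_i,b_i)$ precedes $(\alpha_k,\beta_k)$, so that the syzygy expression genuinely lands in $(F_1,\dots,F_{\mu_{r-1}})$. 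A secondary subtlety is making the phrase ``depends only on'' in statement~(3) precise enough to be provable while still matching the intended application.
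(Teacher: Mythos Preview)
Your approach is essentially the paper's: items (1)--(2) via Koszul plus the vanishing of Remark~\ref{ord}, item (4) via the syzygy description and a Hilbert-polynomial comparison. Two small points. First, for item (3) the paper does not unfold the full Koszul complex but instead inducts on $j$ using the short exact sequences
\[
0 \to \OO_{Y_{j-1}}(d-a_j,d-b_j) \to \OO_{Y_{j-1}}(d) \to \OO_{Y_j}(d) \to 0,
\]
which is equivalent but slightly cleaner. Second, in Step~3 you conclude $X_{r-1}=X'$ from ``equality of codimension''; this is not quite enough (neither scheme is assumed irreducible), and the paper instead invokes item~(3): both are complete intersections of the same bidegrees, hence have the same Hilbert polynomial, so the inclusion $X_{r-1}\subset X'$ forces equality.

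Your ``main obstacle'' is not one: as you essentially observe, $H^{0}(X,\OO_X(\alpha_k-a_i,\beta_k-b_i))=0$ unless both coordinates are nonnegative, which under the lexicographic convention forces $i\le \mu_k\le \mu_{r-1}$. The step that actually needs a word of justification---both in your write-up and in the paper's---is the appeal to statement~(2) for forms vanishing on $Y$ itself: statement~(2) as written describes $H^{0}(\mathcal{I}_{Y_{j-1},X}(a_j,b_j))$, not $H^{0}(\mathcal{I}_{Y,X}(\alpha_k,\beta_k))$. What one really needs is that every bihomogeneous form vanishing on $Y$ lies in the ideal $(F_1,\dots,F_c)$; this holds because a regular sequence generates a saturated (indeed Cohen--Macaulay) ideal, so the same Koszul argument applies with $Y$ in place of $Y_{j-1}$.
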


\begin{proof}
For the first item, the surjectivity of \( \varphi_j \) follows immediately from the ACM property of \( Y \). To describe the kernel, we tensor the Koszul complex (see \ref{k1}) on \( Y_{j-1} \) with \( \OO_X(a_j, b_j) \), which yields the claimed decomposition. 

For the third item of this proposition, we consider the short exact sequence
\[
\begin{tikzcd}
0 \rar & \OO_{Y_{j-1}}(d - a_j, d - b_j) \rar & \OO_{Y_{j-1}}(d) \rar & \OO_{Y_j}(d) \rar & 0.
\end{tikzcd}
\]
The conclusion follows by induction on \( j \).

Finally, we prove the last item.  Suppose \( X_{r-1}' \) is another complete intersection of the same bidegrees containing \( Y \). By item (2) of that proposition, the defining equations of \( X_{r-1}' \) are polynomials combinations of \( F_1, \dots, F_{\mu_{r-1}} \), so \( X_{r-1}' \subseteq X_{r-1} \). Since both have the same Hilbert polynomial (by item (3)), equality holds: \( X_{r-1}' = X_{r-1} \). 
\end{proof}

Let $Y$ be a complete intersection subscheme of $X \subset \P^r$. For a coherent sheaf $\mathcal{F}$ on $\P^r$, the Hilbert polynomial $p_{\mathcal{F}}$ is given by
\[
p_{\mathcal{F}}(d) = \sum_{i \geq 0} (-1)^i h^i(\P^r, \mathcal{F}(d)),
\]
and the Hilbert polynomial $p_Y$ of $Y$ is defined as that of its structure sheaf $\OO_Y$.

Let $\mathcal{I}_Y$ be the ideal sheaf of $Y$ in $\P^r$ and $\mathcal{I}_{Y,X} = \mathcal{I}_Y|_X$ its 
restriction to $X$. By Serre's Vanishing Theorem, there exists an integer $d_0$ such that for all $d \geq d_0$ and $i > 0$,
\[
h^i(\P^r, \OO_Y(d)) = 0.
\]
Thus, for $d \gg 0$, the Hilbert polynomial $p_Y(d)$ is determined only by $h^0(\P^r, \OO_Y(d))$.

Consider the short exact sequence of sheaves on $X$:
\[
\begin{tikzcd}
0 \arrow{r} & \mathcal{I}_{Y,X} \arrow{r} & \OO_X \arrow{r} & \OO_Y \arrow{r} & 0.
\end{tikzcd}
\]
Twisting it by $\OO_X(d)$ and taking global sections, we obtain, for $d$ sufficiently large,
\[
p_Y(d) = h^0(X, \OO_X(d)) - h^0(X, \mathcal{I}_{Y,X}(d)).
\]
From Proposition \ref{prop15}, for $d \gg 0$, the dimension $h^0(X, \mathcal{I}_{Y,X}(d))$ depends only on $m$, $n$, $d$, $c$, and the bidegrees of the defining equations. In particular, if $Y = \bigcap_{i=1}^c V(F_i)$ and $Y' = \bigcap_{i=1}^c V(F_i')$, where $F_i, F_i' \in H^0(X, \OO_X(a_i, b_i))$, then $p_Y = p_{Y'}$.\\
\begin{prop}\label{} Let $Y= V(F)$ be a hypersurfaces of $X$ of bidegree $(a,b)$. If $Y'$ is variety  of $X$ with $p_{Y'}= p_Y$, then $Y'$ is a hypersurface and $\deg (F')\in S=\{(a,b),(b+m-n,a)\}$. More yet, we have $\#S=1$ wherever:
\begin{itemize}
    \item $a=b$ or
    \item $m<n$ and $0<a-b \neq m$.
\end{itemize}
\end{prop}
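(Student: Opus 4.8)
The plan is to carry out two standard geometric reductions and then solve a purely combinatorial problem about multisets of integers. First, $\deg p_Y = m+n-1 = \Dim X - 1$, so $p_{Y'}=p_Y$ forces $\Dim Y' = m+n-1$; a variety of codimension one in the smooth $X=\P^m\times\P^n$ is an effective Cartier divisor, and since every effective divisor class on $\P^m\times\P^n$ has the form $(a',b')$ with $a',b'\ge 0$, we may write $Y'=V(F')$ for a bihomogeneous $F'$ of some bidegree $(a',b')$, $a',b'\ge 0$. This already shows $Y'$ is a hypersurface; it remains to constrain $(a',b')$. Twisting $0\to\OO_X(-a',-b')\to\OO_X\to\OO_{Y'}\to 0$ by $\OO_X(d,d)$ and using the Künneth formula gives $p_{Y'}(d)=\binom{m+d}{m}\binom{n+d}{n}-\binom{m+d-a'}{m}\binom{n+d-b'}{n}$ as polynomials in $d$, and likewise for $Y$, so $p_Y=p_{Y'}$ is equivalent to the identity $\binom{m+d-a}{m}\binom{n+d-b}{n}=\binom{m+d-a'}{m}\binom{n+d-b'}{n}$. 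Since $\binom{m+d-a}{m}=\frac1{m!}\prod_{k=1}^m(d-a+k)$, both sides have degree $m+n$ and the same leading coefficient $\frac1{m!\,n!}$, so the identity holds if and only if they share the same multiset of roots; writing $[p,q]$ for the set of integers from $p$ to $q$, it reads
\[
[a-m,a-1]\ \uplus\ [b-n,b-1]\;=\;[a'-m,a'-1]\ \uplus\ [b'-n,b'-1].
\]

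Thus everything reduces to the question: in how many ways can the disjoint union of a run of $m$ and a run of $n$ consecutive integers be split again into a run of $m$ plus a run of $n$? I would treat this by the relative position of the two runs $I=[a-m,a-1]$ and $J=[b-n,b-1]$: (i) disjoint with a gap; (ii) overlapping, including the nested case; (iii) exactly adjacent. In case (i) the multiset is a disjoint union of two runs of lengths $m$ and $n$, so when $m<n$ the run of length $m$ must be the $\P^m$-block and the splitting is forced, giving $(a',b')=(a,b)$, whereas for $m=n$ the two runs may be interchanged. In case (ii) the multiplicity function is a ``staircase'' rising by one at the left end of the overlap and falling by one at the right end, so the extreme positions of the multiset pin down its two constituent runs; a short computation — and this is precisely where $m\neq n$ enters — shows the splitting is again unique. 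In case (iii) the multiset is a single run of $m+n$ consecutive integers, which splits in exactly two ways, the length-$m$ sub-run being a prefix or a suffix; reading off the endpoints and identifying which sub-run is the $\P^m$-block yields the second bidegree $(b+m-n,a)$. Together these give $(a',b')\in S=\{(a,b),(b+m-n,a)\}$.

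Finally, for $\#S$: one has $(b+m-n,a)=(a,b)$ exactly when $a=b$ and $m=n$, so if $m=n$ then $\#S=1$ exactly when $a=b$. If $m<n$ the two formal pairs are always distinct, but a genuine second splitting occurs only in case (iii) with the $\P^m$-block on the correct side, which forces $a-b=m$; hence whenever $a=b$, or $m<n$ and $0<a-b\neq m$, the multiset $[a-m,a-1]\uplus[b-n,b-1]$ has only the trivial splitting and $\#S=1$. I expect the main obstacle to be the trichotomy above — specifically, proving uniqueness of the splitting for overlapping runs (where the hypothesis $m\neq n$ is indispensable) and keeping careful track of the endpoints in the adjacent case so as to arrive at exactly $(b+m-n,a)$; the geometric reductions, the Künneth computation, and the codimension count are routine.
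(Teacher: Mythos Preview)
Your reduction to the multiset-of-roots identity
\[
[a-m,a-1]\uplus[b-n,b-1]\;=\;[a'-m,a'-1]\uplus[b'-n,b'-1]
\]
is exactly the paper's starting point, but the paper solves it more economically than your trichotomy. Rather than splitting on the relative position of the two intervals, the paper reads off three scalar invariants: the largest root ($a-1$, assuming $a\ge b$), the smallest root ($b-n$), and the \emph{leading coefficient of $p_Y$ itself}, which gives the degree relation $ma+nb=ma'+nb'$. A single dichotomy on whether $a'\ge b'$ or $a'<b'$ then suffices: in the first case the largest root gives $a'=a$ and the degree relation finishes $b'=b$; in the second case the largest root gives $b'=a$ and the smallest root gives $a'=b+m-n$. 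The same degree relation makes the $\#S=1$ claim a one-line check: substituting $(a',b')=(b+m-n,a)$ into $ma+nb=ma'+nb'$ rearranges to $(m-n)(a-b-m)=0$, so under $m<n$ this alternative is only consistent when $a-b=m$. Your interval case analysis reaches the same conclusions, but watch your case~(ii) when $m=n$: the splitting is \emph{not} unique there (the two equal-length blocks may be swapped), so for the containment $(a',b')\in S$ you still need to observe that the swap yields $(b,a)=(b+m-n,a)\in S$ --- your sketch states uniqueness instead and leaves this out.
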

\begin{proof}
  Let $Y=V(F)$ be a hypersurface of $X$ such that $\deg F=(a,b)$. The degree of $p_{Y'}$ implies that $Y'$ is also a 
  hypersurface, say of bidegree $(a',b')$. Comparing the leading coefficients of $p_Y$ and $p_{Y'}$, we obtain 
  $ma + nb = ma' + nb'$. Since $p_Y(t) = p_{Y'}(t)$, it follows that
    \[
    h^0(X,\OO(t-a,t-b)) = h^0(X,\OO(t-a',t-b')),
    \]
    which is equivalent to
    \[
    \binom{t - a + m}{m} \binom{t - b + n}{n} = \binom{t - a' + m}{m} \binom{t - b' + n}{n},
    \]
or else
\begin{equation}{\label{eq123}}
    \begin{matrix}
{(t-a+m)\cdots(t-a+1)(t-b+n)\cdots(t-b+1)}\frac{1}{m!n!} & =  \\
{(t-a'+m)\cdots(t-a'+1)(t-b'+n)\cdots(t-b'+1)}\frac{1}{m!n!}. &
\end{matrix}
\end{equation}

   Assume without loss of generality that \( a \geq b \). Then, the largest root of the left-hand side of the 
   Equation \eqref{eq123} is \( a - 1 \). If  \( a' \geq b' \), then the largest root of the right-hand side 
   is \( a - 1 \) which implies \( a = a' \) and from \( ma + nb = ma' + nb' \), we deduce \( b' = b \). %

  Suppose $a'<b'$, then \( a = b' \)  . The smallest root of the left side of the Equation \eqref{eq123} is $b-n$ while the 
  smallest root of the right-hand side of the Equation \eqref{eq123} is \( \min(a' - m, b' - n) \). This implies 
  \( b = b'+n-m \) or \( b = b' \). The last equality would force $b'=a'=a$ and from Equation \eqref{eq123}, we would have $b'=a'$. Thus $(a',b')=(b+m-n,a)$.

If $a=b$ is clear that $\#S=1$. Hence, suppose $m<n$  and  $0<a-b \neq m$. If $(a',b')=(b+m-n,a)$, then
$$ma+nb= ma'+nb'= m(b-n+m)+na=mb-mn+m^2+na,$$
which is equivalent to 
$$nb-na+mn=mb-ma+m^2\Rightarrow n(b-a+m)=n(b-a+m),$$
which implies $m=n$ (absurd). Thus $\#S=1$.

\end{proof}


\begin{prop}\label{prop18}
Let $k \subset K$ be a field extension, and let $Y$ be a closed subscheme of $\mathbb{P}^{m} \times \mathbb{P}^{n}$. Then $Y$ is arithmetically Cohen-Macaulay if and only if $Y_K$ is ACM.
\end{prop}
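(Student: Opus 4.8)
The plan is to reduce the statement to a standard fact about Cohen--Macaulayness under flat base change of the homogeneous coordinate ring. Let $S = k[Z_0, \dots, Z_r]$ be the homogeneous coordinate ring of $\mathbb{P}^r$ under the Segre embedding, and let $A = S/I_Y$ be the homogeneous coordinate ring of $Y$. Since $Y$ is defined over $k$, base change commutes with taking the coordinate ring: the homogeneous coordinate ring of $Y_K = Y \times_{\Spec k} \Spec K$ inside $\mathbb{P}^r_K$ is $A \otimes_k K = A_K$. So the claim becomes the purely commutative-algebra assertion that the finitely generated graded $k$-algebra $A$ is Cohen--Macaulay if and only if $A_K = A \otimes_k K$ is Cohen--Macaulay, where $K/k$ is an arbitrary field extension.

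First I would recall that $A$ is ACM (in the sense used in the paper) exactly when $A_{\fm}$ is a Cohen--Macaulay local ring, where $\fm$ is the irrelevant maximal ideal; equivalently, $\depth A_{\fm} = \dim A$. The key input is faithful flatness: $k \to K$ is faithfully flat, hence $A \to A_K$ is faithfully flat, and $\dim A_K = \dim A$ (dimension is preserved under this base change since the fibers are zero-dimensional — in fact the generic fiber over every prime of $A$ is just $A_{\mathfrak p} \otimes_k K$, a localization of a finite free module). Then I would invoke the local criterion: for a flat local homomorphism $(R,\fm) \to (R',\fm')$ with closed fiber $R'/\fm R'$, one has $\depth R' = \depth R + \depth(R'/\fm R')$; here the closed fiber is $A_{\fm}\otimes_k K$ localized at its maximal ideal — essentially $A_K$ localized at the prime lying over $\fm$, whose fiber ring is the field $K$ (after passing to residue fields), so the fiber contributes depth $0$ in the relevant sense and $\depth (A_K)_{\fm_K} = \depth A_{\fm}$. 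Combined with equality of dimensions, this gives the equivalence of the Cohen--Macaulay property. Alternatively, and perhaps more cleanly, I would use the cohomological characterization already quoted in the paper — $Y$ is ACM iff $H^i(\mathbb{P}^r, \mathcal{I}_Y(d)) = 0$ for $1 \le i \le \dim Y$ and all $d$ — together with flat base change for coherent cohomology, $H^i(\mathbb{P}^r_K, \mathcal{F} \otimes_k K) \cong H^i(\mathbb{P}^r, \mathcal{F}) \otimes_k K$, which immediately exchanges the vanishing over $k$ with the vanishing over $K$ since a $k$-vector space is zero iff its extension to $K$ is zero.

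Concretely I would organize the proof around the cohomological route: twist the ideal sheaf sequence $0 \to \mathcal{I}_Y \to \mathcal{O}_{\mathbb{P}^r} \to \mathcal{O}_Y \to 0$ by $\mathcal{O}(d)$, note that all three sheaves are flat over $\Spec k$ (being sheaves on a scheme over a field) and that formation of their cohomology commutes with the base extension $\Spec K \to \Spec k$ by the flat base change theorem (Hartshorne III.9.3, or the projection formula plus flatness). Then $h^i(\mathbb{P}^r_K, \mathcal{I}_{Y_K}(d)) = h^i(\mathbb{P}^r, \mathcal{I}_Y(d))$ for all $i, d$, and likewise for $\mathcal{O}_Y$; since the ACM condition is the vanishing of a prescribed family of these integers, it holds for $Y$ iff it holds for $Y_K$. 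One must also check $\dim Y_K = \dim Y$ so that the range $1 \le i \le \dim Y$ is the same in both cases, which is standard for base change along a field extension.

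The main obstacle, such as it is, is not a deep one: it is making sure the bookkeeping is airtight about \emph{which} ACM characterization is being used and that base change genuinely applies to it. If one works with the coordinate ring, the subtlety is verifying $\dim A_K = \dim A$ and handling the depth-of-the-fiber term in the local flatness criterion; if one works cohomologically, the subtlety is confirming that $\mathcal{I}_{Y_K}$ is the base change $\mathcal{I}_Y \otimes_k K$ (true because $Y_K$ is cut out by the extended ideal, and flatness of $k \to K$ means extending the ideal commutes with forming the quotient) and that the relevant cohomology groups are finite-dimensional so that ``zero over $k$'' and ``zero over $K$'' are equivalent — both automatic here since everything is projective over a field. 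I would present the cohomological argument as the main line and mention the coordinate-ring/faithful-flatness argument as an alternative remark.
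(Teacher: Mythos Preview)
Your argument is entirely correct for the \emph{classical} notion of arithmetically Cohen--Macaulay (vanishing of $H^{i}(\mathcal{I}_Y(d))$ for $1\le i\le\dim Y$, or equivalently Cohen--Macaulayness of the homogeneous coordinate ring), and the cohomological route via flat base change is clean. The problem is that this is not the notion the paper is using. In Section~\ref{sec4} the authors redefine the abbreviation: ``We say a scheme $Y$ is a \emph{complete intersection ACM} (or simply \emph{ACM}) if it is defined by a regular sequence and its bidegree satisfies the criterion in Remark~\ref{ord}.'' So in Proposition~\ref{prop18}, ``$Y$ is ACM'' means in particular that $Y$ is a complete intersection in $X$ cut out by forms of specific bidegrees obeying Remark~\ref{ord}. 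The way the proposition is invoked later (e.g.\ in the proof of Proposition~\ref{prop23}, to conclude that a $\kappa(x)$-scheme lies in $U$ from information about a base extension) confirms this reading.

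With that reading, the nontrivial direction is the converse: given that $Y_K$ is a complete intersection of the prescribed bidegrees, show that $Y$ itself is one over $k$. Your cohomological argument gives back the vanishing of $H^{i}(\mathcal{I}_Y(d))$, but that alone does not produce bihomogeneous generators $F_1,\dots,F_c$ of the ideal with the correct bidegrees over $k$. The paper's proof does exactly this missing work: it fixes a regular sequence $G_1,\dots,G_c$ for $Y_K$, and by induction on $j$ uses the strict inclusion $H^{0}(X,\mathcal{I}_{Y_j}(a_{j+1},b_{j+1}))\subsetneq H^{0}(X,\mathcal{I}_{Y}(a_{j+1},b_{j+1}))$ (descended from $K$ by flatness) together with the description of $H^{0}(X_K,\mathcal{I}_{Y_K}(a_{j+1},b_{j+1}))$ in Proposition~\ref{prop15} to locate a new generator $F_{j+1}$ over $k$ whose extension can replace $G_{j+1}$. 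That inductive construction is the genuine content of the proposition in the paper's sense, and it is absent from your proposal.
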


\begin{proof}
Let $Y = \bigcap_{i=1}^{c} V(F_i)$, and let $Y_K = \bigcap_{i=1}^{c} V(F_{i,K})$.  

\noindent Suppose $Y$ is ACM. By the flatness of base change, $Y_K$ is also a complete intersection. Since the bidegrees are preserved under base change, $Y_K$ is projectively normal, hence ACM. Conversely, suppose $Y_K$ is ACM in $X_K$. Let $G_1, \dots, G_c$ be a regular sequence defining $Y_K$, ordered appropriately.\\ 
We proceed by induction on $0 \leq r \leq c$ to show that (after possibly modifying the regular sequence) there exist $F_i \in H^{0}(X, \mathcal{I}_{Y}(a_i,b_i))$ such that $G_i = F_{i,K}$. The base case ($r = 0$) is trivial.\\

  \noindent\textbf{Inductive step:} Assume the claim holds for some $r = j$. Let $Y_j = \bigcap_{i=1}^{j} V(F_i)$. Consider the inclusion of $k$-vector spaces:  
  \[
  H^{0}(X, \mathcal{I}_{Y_j}(a_{j+1},b_{j+1})) \subset H^{0}(X, \mathcal{I}_{Y}(a_{j+1},b_{j+1})).
  \]  
  Extending scalars to $K$, the base change theorem gives an inclusion of $K$-vector spaces:  
  \[
  H^{0}(X, \mathcal{I}_{Y_{j,K}}(a_{j+1},b_{j+1})) \subset H^{0}(X, \mathcal{I}_{Y_K}(a_{j+1},b_{j+1})).
  \]  
  Since $G_{j+1} \in H^{0}(X, \mathcal{I}_{Y_K}(a_{j+1},b_{j+1}))$ but $G_{j+1} \notin H^{0}(X, \mathcal{I}_{Y_{j,K}}(a_{j+1},b_{j+1}))$, the inclusion is strict. By flatness, the original inclusion over $k$ is also strict. Thus, there exists $F_{j+1} \in H^{0}(X, \mathcal{I}_{Y}(a_{j+1},b_{j+1}))$ such that  
  \[
  F_{j+1} \notin H^{0}(X, \mathcal{I}_{Y_j}(a_{j+1},b_{j+1})).
  \]  

  By Proposition \ref{prop15}, $F_{j+1,K} \in H^{0}(X, \mathcal{I}_{Y_K}(a_{j+1},b_{j+1}))$ can be written as  
  \[
  F_{j+1,K} = \sum_{i=1}^{c} G_i H_i,
  \]  
  where $H_i \in H^{0}(X_K, \mathcal{O}_{X_K}(a_{j+1}-a_i, b_{j+1}-b_i))$. Since $F_{j+1,K} \notin H^{0}(X, \mathcal{I}_{Y_{j,K}}(a_{j+1},b_{j+1}))$, there exists $l \geq j+1$ such that $H_l \neq 0$. This implies $(a_l, b_l) = (a_{j+1}, b_{j+1})$, and we may assume $l = j+1$. Thus, we can replace $G_{j+1}$ with $F_{j+1,K}$ to obtain a new regular sequence for $Y_K$.\\

Let $Y' = \bigcap_{i=1}^{c} V(F_i)$. Since $Y' \subset Y$ and $Y'_K = Y_K$, the flatness of $k \to K$ ensures $Y' = Y$. Finally, since an ACM subscheme is determined by its bidegree and the induction process preserves bidegrees (up to reordering), the result follows.
\end{proof}


\begin{prop}\label{pro21}
    Let $T$ be a Noetherian scheme and $Y \subset X_T$ a closed subscheme such that the projection $\pi: Y \to T$ is flat with ACM fibers. Then for all $d \in \mathbb{Z}$, $\pi_* \mathcal{O}_Y(d)$ is locally free and its formation commutes with arbitrary base change.

    Moreover, if the fibers have bidegrees $(a_1,b_1), \ldots, (a_c,b_c)$ and $(a_{c+1},b_{c+1})$ satisfies the ACM relations of these fibers, then $\pi_* \mathcal{O}_Y(a_{c+1},b_{c+1})$ is locally free and commutes with base change.
\end{prop}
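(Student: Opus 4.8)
The plan is to reduce everything to the cohomology and base-change formalism for $\pi\colon X_T\to T$, using that $\pi$ is smooth projective with geometrically connected fibers of dimension $m+n$, and then to bootstrap from the Koszul resolution of $\OO_Y$ inside $X_T$. First I would recall that the statement ``$\pi_*\mathcal F$ is locally free and commutes with base change'' is, by the theorem on cohomology and base change (Grothendieck), equivalent to the vanishing $R^i\pi_*\mathcal F(d)=0$ for all $i>0$ after restriction to every fiber, i.e.\ to $h^i(Y_t,\OO_{Y_t}(d))=0$ for $i>0$ and all $t\in T$. So the first reduction is: it suffices to prove that for every ACM fiber $Y_t$ of bidegrees $(a_1,b_1),\dots,(a_c,b_c)$ one has $h^i(Y_t,\OO_{Y_t}(d))=0$ for all $i\geq 1$ and all $d\in\Z$ when $1\le i\le \dim Y_t-1$, together with control of $h^{\dim Y_t}$ and $h^0$; but in fact the cleanest route is to go through the ideal sheaf.

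The key technical step is to analyze, fiberwise, the Koszul complex
\begin{equation*}
0\to \OO_{X_t}\!\Big(\textstyle\sum_i(-a_i,-b_i)\Big)\to\cdots\to\bigoplus_i\OO_{X_t}(-a_i,-b_i)\to\mathcal I_{Y_t,X_t}\to 0
\end{equation*}
exactly as in \eqref{k1}, and to observe that the ACM relations of Remark~\ref{ord} (which hold for the bidegrees $(a_1,b_1),\dots,(a_c,b_c)$ by hypothesis, and by the moreover-clause also for the augmented tuple including $(a_{c+1},b_{c+1})$) force all intermediate cohomology of the twisted terms $\OO_{X_t}(d-\sum a_{i_j},d-\sum b_{i_j})$ to vanish, by Table~\ref{T1} and the computation already carried out in the proof of Proposition~\ref{P10}. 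Chasing this through the hypercohomology spectral sequence of the Koszul complex gives $h^j(X_t,\mathcal I_{Y_t,X_t}(d))=h^{j+c}(X_t,\OO_{X_t}(d-\sum a_i,d-\sum b_i))$, hence $H^j(X_t,\mathcal I_{Y_t,X_t}(d))=0$ for $1\le j\le m+n-c$, and therefore from $0\to\mathcal I_{Y_t,X_t}(d)\to\OO_{X_t}(d)\to\OO_{Y_t}(d)\to0$ one extracts $h^i(Y_t,\OO_{Y_t}(d))=0$ for $1\le i\le \dim Y_t-1$ and the surjectivity $H^0(X_t,\OO_{X_t}(d))\twoheadrightarrow H^0(Y_t,\OO_{Y_t}(d))$, i.e.\ $h^0$ is computed by the formula in Proposition~\ref{prop15}(3); the top cohomology is handled by Serre duality using $\omega_{Y_t}\cong\OO_{Y_t}(\sum a_i-m-1,\sum b_i-n-1)$ from Proposition~\ref{K}(3).

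With fiberwise vanishing in hand, I would invoke Grauert's theorem (constancy of $t\mapsto h^0(Y_t,\OO_{Y_t}(d))$, which holds because all higher $h^i$ vanish and $T$ is Noetherian, so the Euler characteristic is locally constant and equals $h^0$): this yields that $\pi_*\OO_Y(d)$ is locally free and its formation commutes with base change, and likewise for the twist $(a_{c+1},b_{c+1})$ once we know the augmented bidegree tuple still satisfies the ACM relations, which is precisely the hypothesis of the moreover-clause — note that adjoining one more form of an admissible bidegree only enlarges the set of subsequences whose cohomology must be checked, and all the new subsequences involving index $c+1$ are controlled by the assumed relations. The main obstacle I anticipate is bookkeeping the spectral-sequence degeneration cleanly over a non-reduced base $T$: strictly speaking one should run the argument with $\OO_{X_t}$ replaced by $\OO_{X_T}$ and the sheaves $R^i\pi_*$ of the Koszul terms, rather than literally restricting to points, and then deduce the base-change statement for $\pi_*\mathcal I_{Y,X_T}(d)$ and $\pi_*\OO_Y(d)$ simultaneously by descending induction on the Koszul degree; the point-by-point vanishing is what makes each $R^i\pi_*$ of a Koszul term either zero or locally free, so the hypercohomology argument sheafifies without trouble, but this needs to be stated carefully rather than hand-waved.
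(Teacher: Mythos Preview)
Your argument has a genuine gap at the point where you invoke Grauert. You claim constancy of $h^0(Y_t,\OO_{Y_t}(d))$ ``because all higher $h^i$ vanish,'' but this is false: the top cohomology $h^{m+n-c}(Y_t,\OO_{Y_t}(d))$ is nonzero for $d\ll 0$ (by Serre duality it equals $h^0(Y_t,\omega_{Y_t}(-d))$). You notice this earlier when you say the top cohomology is ``handled by Serre duality,'' but that only converts it into $h^0$ of the generally \emph{non-diagonal} twist $\OO_{Y_t}(\sum a_i-m-1-d,\,\sum b_i-n-1-d)$, whose constancy is not already established --- the argument becomes circular. Even granting constancy of $h^0$, Grauert's theorem in the form ``constant $h^0$ $\Rightarrow$ $\pi_*$ locally free'' needs $T$ reduced, which is not assumed here. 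Your opening equivalence (``$\pi_*\mathcal F$ locally free and commutes with base change $\Leftrightarrow$ all fiberwise $h^i$ vanish for $i>0$'') is likewise wrong: the right side is sufficient but not necessary. Finally, your fallback of sheafifying the Koszul complex over $T$ is not available as stated, since the hypothesis is only that the \emph{fibers} are ACM complete intersections --- no global regular sequence cutting out $Y$ in $X_T$ is given.

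The paper's argument is much shorter and uses only one ingredient you already derived: the surjectivity of the restriction $H^0(X_t,\OO_{X_t}(d))\to H^0(Y_t,\OO_{Y_t}(d))$ (Corollary~\ref{cor11} or Proposition~\ref{prop15}). Place it in the square
\[
\begin{tikzcd}
\pi_*\OO_{X_T}(d)\otimes k(t) \arrow{r} \arrow{d} & H^0(X_t,\OO_{X_t}(d)) \arrow[two heads]{d} \\
\pi_*\OO_Y(d)\otimes k(t) \arrow{r}{\varphi^0(t)} & H^0(Y_t,\OO_{Y_t}(d)).
\end{tikzcd}
\]
The top row is surjective because $X_T=X\times T$ is a trivial family (so $\pi_*\OO_{X_T}(d)\cong H^0(X,\OO_X(d))\otimes\OO_T$), hence $\varphi^0(t)$ is surjective for every $t$. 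Then \cite[III.12.11]{RH77} gives that $\varphi^0(t)$ is an isomorphism and, since $\varphi^{-1}(t)$ is trivially surjective, that $\pi_*\OO_Y(d)$ is locally free; commutation with arbitrary base change follows (this is the content of \cite[Prop.~1.1.7]{OB12}). The same square with $(d,d)$ replaced by $(a_{c+1},b_{c+1})$ handles the moreover clause. No vanishing of $H^i(\OO_{Y_t})$ for $i\ge 1$ is ever needed.
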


\begin{proof}
    For every $t \in T$, we have the commutative diagram:
    \[
    \begin{tikzcd}
        \pi_*\mathcal{O}_{X_t}(a_{c+1},b_{c+1}) \otimes k(t) \arrow{r}{pr} \arrow{d} & H^0(X_t, \mathcal{O}_{X_t}(a_{c+1},b_{c+1})) \arrow{d} \\
        \pi_*\mathcal{O}_Y(a_{c+1},b_{c+1}) \otimes k(t) \arrow{r}{pr'} & H^0(Y_t, \mathcal{O}_{Y_t}(a_{c+1},b_{c+1}))
    \end{tikzcd}
    \]
    By Proposition~\ref{prop15} or Corollary~\ref{cor11}, the right vertical arrow is surjective. Consequently, from \cite[Proposition 12.11-a]{RH77}, the left vertical arrow is also surjective. The map $pr$ is surjective by definition of the polynomial, hence $pr'$ is surjective. Therefore, by \cite[Proposition 1.1.7]{OB12}, $\pi_* \mathcal{O}_Y(a_{c+1},b_{c+1})$ is locally free and commutes with base change.
\end{proof}

By adapting the arguments of Olivier in \cite[Proposition 2.1.12]{OB12} for the biprojective space, we provide a proof of the following result:  

\begin{prop}\label{prop20}
    Let $T$ be the spectrum of a DVR, and let $Y \subset X_T$ be a closed subscheme that is flat over $T$. 
    If the special fiber of $Y$ is an ACM complete intersection, then the generic fiber is also an ACM 
    complete intersection. Moreover, any defining equation of the special fiber lifts to a defining 
    equation of $Y$.
\end{prop}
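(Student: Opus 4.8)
The plan is to work over $T = \Spec A$ with $A$ a DVR with maximal ideal $(t)$, residue field $k = A/(t)$, and fraction field $K$. Write the special fiber as $Y_0 = \bigcap_{i=1}^c V(f_i) \subset X_k$, an ACM complete intersection of bidegrees $(a_1,b_1), \dots, (a_c,b_c)$ satisfying the relations of Remark~\ref{ord}. The strategy is to lift the $f_i$ one at a time to sections over $A$ cutting out $Y$, and then to check that the generic fiber of the resulting scheme is again an ACM complete intersection; flatness will force this lifted scheme to coincide with $Y$. The main tool is Proposition~\ref{pro21}: since $\pi\colon Y \to T$ is flat with ACM special fiber, $\pi_*\OO_Y(a_i,b_i)$ is locally free (after possibly shrinking, but $T$ is local so it is free) and commutes with base change, so $H^0(Y, \OO_Y(a_i,b_i)) \otimes_A k \cong H^0(Y_0, \OO_{Y_0}(a_i,b_i))$.

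First I would set up the induction. Suppose I have lifted $f_1, \dots, f_{j}$ to bihomogeneous forms $F_1, \dots, F_j$ with $A$-coefficients, defining a closed subscheme $\mathcal{Y}_j \subset X_T$ flat over $T$ with special fiber $Y_{0,j} = \bigcap_{i\le j} V(f_i)$ and with $Y \subseteq \mathcal{Y}_j$. For the inductive step, I consider the restriction map
\[
H^0(X_T, \mathcal{I}_{Y, X_T}(a_{j+1}, b_{j+1})) \longrightarrow H^0(X_k, \mathcal{I}_{Y_0, X_k}(a_{j+1}, b_{j+1})),
\]
which is surjective because (using Proposition~\ref{pro21} applied to the ideal sheaf, via the exact sequence $0 \to \mathcal{I}_{Y,X_T} \to \OO_{X_T} \to \OO_Y \to 0$ and the fact that $\pi_*\OO_Y(a_{j+1},b_{j+1})$ commutes with base change) the obstruction in $H^1$ vanishes; more concretely, $H^0(X_T, \mathcal{I}_{Y,X_T}(a_{j+1},b_{j+1}))$ is a free $A$-module whose reduction mod $t$ surjects onto $H^0(X_k, \mathcal{I}_{Y_0,X_k}(a_{j+1},b_{j+1}))$. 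So I can choose $F_{j+1} \in H^0(X_T, \mathcal{I}_{Y,X_T}(a_{j+1}, b_{j+1}))$ reducing to $f_{j+1}$ mod $t$. Setting $\mathcal{Y}_{j+1} = \mathcal{Y}_j \cap V(F_{j+1})$, its special fiber is $Y_{0,j+1}$, which has the expected codimension $j+1$; I would then invoke flatness of $X_T$ over $T$ together with the fact that the Hilbert polynomial of the special fiber is the "expected" complete-intersection one (computed from the bidegrees via Proposition~\ref{prop15}(3)) to deduce that $\mathcal{Y}_{j+1}$ is flat over $T$ with both fibers complete intersections of the same bidegrees — this is where the constancy of the Hilbert polynomial in flat families, established in Section~3, does the work.

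After $c$ steps I obtain $\mathcal{Y}_c = \bigcap_{i=1}^c V(F_i)$, flat over $T$, with $Y \subseteq \mathcal{Y}_c$ and $\mathcal{Y}_c \times_T \Spec k = Y_0 = Y \times_T \Spec k$. Since both $Y$ and $\mathcal{Y}_c$ are flat over the DVR $A$ and agree on the closed fiber, their ideal sheaves in $X_T$ agree (a flat family over a DVR is determined by either fiber together with flatness; concretely $\mathcal{I}_{Y} / \mathcal{I}_{\mathcal{Y}_c}$ is a finitely generated $A$-module that is torsion, hence supported on the closed fiber, yet $\mathcal{I}_{\mathcal{Y}_c}$ is $t$-saturated by flatness, forcing $\mathcal{I}_Y = \mathcal{I}_{\mathcal{Y}_c}$), so $Y = \mathcal{Y}_c$. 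Restricting to the generic point, $Y_K = \bigcap_{i=1}^c V(F_{i,K})$ is a complete intersection of the same bidegrees $(a_i,b_i)$, which still satisfy the relations of Remark~\ref{ord} (these depend only on the bidegrees and on $m,n$, not on the field), hence $Y_K$ is ACM by the very definition of complete intersection ACM together with Proposition~\ref{prop18}. Finally, the statement "any defining equation of the special fiber lifts" is exactly the content of the inductive step: given any $f \in H^0(X_k, \mathcal{I}_{Y_0,X_k}(a,b))$, surjectivity of $H^0(X_T, \mathcal{I}_{Y,X_T}(a,b)) \to H^0(X_k, \mathcal{I}_{Y_0,X_k}(a,b))$ provides the lift.

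The step I expect to be the main obstacle is the verification that $\mathcal{Y}_{j+1}$ is \emph{flat} over $T$ at each stage — i.e., that intersecting with $V(F_{j+1})$ does not introduce embedded components or drop the dimension of the generic fiber. The clean way to handle this is to check that the special fiber $Y_{0,j+1}$ has the expected dimension $m+n-(j+1)$ (true because $Y_0$ itself is a complete intersection, so all partial intersections are) and that $F_{j+1,K}$ is a non-zerodivisor on $\OO_{\mathcal{Y}_{j,K}}$; equivalently, that the Hilbert polynomial of $\mathcal{Y}_{j+1}$ is constant across the fibers, which follows from upper semicontinuity plus the lower bound coming from the special fiber being a complete intersection with the predicted Hilbert polynomial. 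I would lean on the machinery of \cite[Section 2.1]{OB12}, adapted to $X$ via Proposition~\ref{pro21}, to make this rigorous rather than re-deriving it.
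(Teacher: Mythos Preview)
Your proposal is correct and follows essentially the same strategy as the paper: lift the defining equations of the special fiber using the freeness of $\pi_*\mathcal{I}_{Y,X_T}(d)$ (via Proposition~\ref{pro21} and the vanishing $H^1(\mathcal{I}_{Y_0,X_k}(d))=0$), then compare the resulting complete intersection $\mathcal{Y}_c$ with $Y$ by a Hilbert-polynomial argument.

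The one place where the paper is more economical is precisely the step you flagged as the main obstacle. Rather than verifying inductively that each $\mathcal{Y}_{j}$ is flat over $T$, the paper lifts all the $F_i$ at once to obtain $Z' = V(\widetilde F_1,\dots,\widetilde F_c)$ and then works \emph{only at the generic point}: semicontinuity of fiber dimension shows $Z'_\eta$ is a complete intersection of the same bidegrees; Proposition~\ref{prop15}(3) gives $p_{Z'_\eta}=p_{Z_s}$; flatness of $Y$ gives $p_{Z_s}=p_{Z_\eta}$; and since $Z_\eta\subset Z'_\eta$ with equal Hilbert polynomials, they coincide. This sidesteps the flatness of $Z'$ entirely. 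Your route, which establishes flatness of $\mathcal{Y}_c$ and then the global equality $Y=\mathcal{Y}_c$ via Nakayama, is a bit more work but yields the slightly stronger conclusion that the lifted equations cut out $Y$ over all of $T$, not just on each fiber separately.
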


\begin{proof}

Let $T$ be a DVR with special point $s = \operatorname{Spec}(k)$ and generic point $\eta = \operatorname{Spec}(K)$. Consider a flat family $Z \subset X_T$ such that the special fiber $Z_s$ is an ACM complete intersection in $X_s$, defined by a regular sequence $F_1, \dots, F_c$.

Since $X_T$ and $Z$ are flat over $T$, the exact sequence
\[
0 \to \mathcal{I}_{Z} \to \mathcal{O}_{X_T} \to \mathcal{O}_{Z} \to 0
\]
implies that $\mathcal{I}_{Z}$ is also flat over $T$. Moreover, restriction to the special fiber $Z_s$ preserves exactness.

By Corollary \ref{cor11}, we have $H^1(X_k, \mathcal{I}_{Z_s}(d)) = 0$ for all $d \in \mathbb{Z}$. Semi-continuity then ensures that $H^1(X_K, \mathcal{I}_{Z_\eta}(d)) = 0$.

From Proposition \ref{pro21}, the sheaf $\pi_* \mathcal{I}_Z(d)$ is locally free, and its formation commutes with base change. Since $T$ is local, $\pi_* \mathcal{I}_Z(d)$ is in fact free. Thus, any section $F \in H^0(X_k, \mathcal{I}_{Z_s}(d)) = (\pi_* \mathcal{I}_Z(d))_s$ lifts to a global section $\widetilde{F} \in H^0(X_T, \mathcal{I}_Z(d))$, proving the second claim.

We now lift the defining equations $F_1, \dots, F_c$ of $Z_s$ to $\widetilde{F}_1, \dots, \widetilde{F}_c \in H^0(X_T, \mathcal{I}_Z)$, defining a subscheme $Z' = V(\widetilde{F}_1, \dots, \widetilde{F}_c)$ with $Z \subset Z'$ and $Z_s = Z'_s$. By semi-continuity, the generic fiber $Z'_\eta$ is a complete intersection cut out by the regular sequence $\widetilde{F}_{1,\eta}, \dots, \widetilde{F}_{c,\eta}$.

Flatness ensures that $Z_s$ and $Z_\eta$ have the same Hilbert polynomial. By Proposition \ref{prop15}, $Z_s$ and $Z'_\eta$ also share the same Hilbert polynomial. Hence, $Z_\eta$ and $Z'_\eta$ must coincide, as their Hilbert polynomials agree and $Z_\eta \subset Z'_\eta$.

We conclude that $Z_\eta$ is a complete intersection. Since the bidegrees are preserved, then we have that the forms $F_i$ and $\widetilde{F}_{i,\eta}, i=1,\cdots, c$ have the same bidegree  and thus, $Z_\eta$ is ACM.
 
\end{proof}
\section{Hilbert Scheme of Canonical Complete Intersection Curves}



In this Section we construct the Hilbert scheme $\mathcal{H}$ of complete intersections in the biprojective space $X=\mathbb{P}^m\times\mathbb{P}^n$. To do so, we adapt the methods developed by Olivier in \cite{OB12} for $\mathbb{P}^N$. 

Let $\Hilb_r$ denote the Hilbert scheme of $\P^{r}$, and let $\Hilb_r^{p(t)} \subset \Hilb_r$ be the subscheme corresponding to subschemes with Hilbert polynomial $p(t)$. 
For any locally Noetherian separated $k$-scheme $S$, define the \textbf{Hilbert functor}
\[
\h_X^{p(t)}(S) = 
\left\{
\begin{aligned}
&\text{Flat families } \mathcal{X} \subset X \times S \text{ of closed subschemes,} \\
&\text{parametrized by } S, \text{ with fibers having Hilbert polynomial } p(t).
\end{aligned}
\right\}
\]
Since flatness is preserved under base change, this defines a contravariant functor
\[
\h_X^{p(t)} \colon (\text{\underline{Sch}}/k)^{\text{op}} \to \text{\underline{Set}},
\]
where ${\underline{\rm Sch}}/k$ denotes the category of locally Noetherian separated $k$-schemes. It is well known 
that $\h_X^{p(t)}$ is representable by a closed subscheme of $\Hilb_r^{p(t)}$, which  will simply denoted by $\Hilb_X^{p(t)}$. 

The key difference between the standard case and the biprojective case is that not all complete intersections are ACM. Even if two curves share the same Hilbert polynomial and are complete intersections, neither is guaranteed to be ACM.

\begin{Example}
    \textup{Let \( Y = V(F_1, F_2) \) be a complete intersection in \( X = \mathbb{P}^{1} \times \mathbb{P}^{2} \), where \( F_1 \) and \( F_2 \) have bidegrees \( (2,2) \) and \( (1,2) \), respectively. Then \( Y \) has Hilbert polynomial \( p_Y(t) = 10t - 5 \). Now consider another complete intersection \( Y' \subset X \) defined by forms of bidegrees \( (3,2) \) and \( (0,2) \). Although \( p_Y = p_{Y'} \) and both curves are cut out by regular sequences, only \( Y \) satisfies the conditions in Observation~\ref{ord}.}
\end{Example}

For this reason, we restrict our study to \emph{canonical curves}: connected, non-bidegenerate curves whose dualizing sheaf is the restriction to a hyperplane section.

\begin{prop}{\label{proamp}}
Let $\CC$ be a complete intersection in $X$ such that $\omega_\CC = \OO_X(1)|_\CC$. If $\CC$ is cut out by ample divisors, then $\CC$ is ACM, in particular it is canonical.                       
\end{prop}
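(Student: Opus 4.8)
The plan is to show that the hypotheses force the bidegrees of the defining equations of $\CC$ to satisfy the ACM criterion of Remark~\ref{ord}, and then invoke Proposition~\ref{P10} together with the definition of complete intersection ACM. Write $\CC = \bigcap_{i=1}^{c} V(F_i)$ with $F_i$ of bidegree $(a_i,b_i)$, where $c = m+n-1$ since $\CC$ is a curve. The key numerical input comes from two sources: first, $\CC$ is cut out by ample divisors, so $\OO_X(a_i,b_i)$ is ample on $X=\P^m\times\P^n$, which means $a_i \geq 1$ and $b_i \geq 1$ for every $i$; second, by Proposition~\ref{K}(3) the dualizing sheaf is $\omega_\CC \cong \OO_\CC\big(\sum a_i - m - 1, \sum b_i - n - 1\big)$, and the hypothesis $\omega_\CC = \OO_X(1)|_\CC = \OO_\CC(1,1)$ forces
\begin{equation*}
\sum_{i=1}^{c} a_i = m+2 \qquad\text{and}\qquad \sum_{i=1}^{c} b_i = n+2.
\end{equation*}

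First I would verify that these constraints imply the regular sequence condition of Proposition~\ref{P10}. For any subset $i_1 < \cdots < i_\alpha$, since all $b_{i_j} \geq 1$ and the total sum of the $b_i$ over all $c = m+n-1$ indices is $n+2$, we have $\sum_{j} a_{i_j} \leq \big(\sum_{i=1}^c a_i\big) - (c - \alpha) = (m+2) - (m+n-1) + \alpha = \alpha - n + 3$, and since $\alpha \leq c = m+n-1$ this is at most $m+2$; a slightly more careful count using $b_{i_j}\ge 1$ for the complementary indices gives $\sum_j (a_{i_j} - b_{i_j}) \le \sum_j a_{i_j} - \alpha \le (m+2) - (m+n-1-\alpha)\cdot 1 - \alpha = 3 - n \le m$, hence $< m+1$ when $n \geq 1$; the symmetric estimate gives $\sum_j (b_{j_k} - a_{j_k}) < n+1$. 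So by Proposition~\ref{P10}, $\CC$ is generated by a regular sequence (one should also handle the $m=1$ corner case of that proposition, which is automatic here since $a_i\le b_i$ cannot be arranged in general but the cohomological vanishing $H^1$ is covered by the same sum estimate). Then I would check the stronger criterion of Remark~\ref{ord}: for $i_1 < \cdots < i_\alpha$ and $\gamma \notin \{i_1,\dots,i_\alpha\}$, I need $\sum_j a_{i_j} < a_{i_\gamma} + m + 1$ or $b_{i_\gamma} < \sum_j b_{i_j}$, and the symmetric statement. Since $a_{i_\gamma} \geq 1$ and $\sum_j a_{i_j} \leq m+2 - (\text{number of omitted indices}) \le m+1$ whenever at least one index is omitted besides $\gamma$, while if $\gamma$ is the only omitted index then $\alpha = c-1$ and $\sum_j a_{i_j} = m+2 - a_\gamma \le m+1 \le a_\gamma + m$, the first alternative holds; the second relation follows symmetrically. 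Hence $\CC$ satisfies Remark~\ref{ord}, so $\CC$ is ACM by definition.

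Finally, to conclude that $\CC$ is canonical I would record that ACM plus $\omega_\CC = \OO_\CC(1)$ is exactly the canonical condition in the sense used in this section: by Corollary~\ref{cor12}(a) the scheme $\CC$ is connected, and by Corollary~\ref{cor12}(b),(c) it is non-bidegenerate once we know no $(a_i,b_i) = (1,1)$ — but actually here some $(a_i,b_i)$ may equal $(1,1)$, in which case $\CC$ lies in a hyperplane and is projectively a canonical curve in the classical sense; the relevant point is that $\omega_\CC$ is the hyperplane restriction, which is the hypothesis. So the statement "in particular it is canonical" follows immediately from the ACM conclusion and the hypothesis on $\omega_\CC$.

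The main obstacle I anticipate is the bookkeeping in the inequality estimates: one must be careful that the count of omitted indices is handled correctly in the edge cases (all indices selected, all but one, the index $\gamma$ itself being forced), and that the $m=1$ special hypothesis of Proposition~\ref{P10} is genuinely satisfied or circumvented here. The ampleness translating to $a_i,b_i\ge 1$ and the dualizing-sheaf computation pinning down the exact sums are the two clean facts that make everything work; the rest is verifying two chains of elementary inequalities. I would double-check whether $c = m+n-1$ is the right codimension (a curve in an $(m+n)$-dimensional variety has codimension $m+n-1$) and whether the problem implicitly assumes $m+n \geq 2$ so that $\CC$ is genuinely a curve and not a point or the whole space.
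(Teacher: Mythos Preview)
Your approach is correct and essentially identical to the paper's: both derive $a_i,b_i\ge 1$ and $\sum a_i = m+2$, $\sum b_i = n+2$ from the hypotheses, then verify the inequalities of Proposition~\ref{P10} and Remark~\ref{ord} by elementary bounds (the paper phrases these as short contradictions and disposes of $m=n=1$ separately), and conclude canonicity via Corollary~\ref{cor12}. Your worry about the $m=1$ hypothesis of Proposition~\ref{P10} is moot, since only its forward direction (inequalities $\Rightarrow$ regular sequence) is used here, and that direction does not rely on the extra assumption.
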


\begin{proof}
 
Let $\mathcal{C} = \bigcap_{i=1}^{m+n-1} V(F_i)$ be a curve cut out by ample divisors, with $\omega_{\mathcal{C}} \cong \mathcal{O}_{\mathcal{C}}(1)$ and $\deg(F_i) = (a_i, b_i)$.  Consider the case where $m = n = 1$. The only possible bidegree for $\CC$ is $(3,3)$, which is trivial. Thus, we may assume $n > 1$. 

We first show that $\mathcal{C}$ is generated by a regular sequence. Suppose, for contradiction, that there exists a subset of indices $i_1 < \cdots < i_r$ such that  
\[
\sum_{j=1}^r (a_{i_j} - b_{i_j}) > m.
\]
From the degree conditions, we have  
\[
m + 2 = \sum_{i=1}^{m+n-1} a_i \geq \sum_{j=1}^r a_{i_j} > m + \sum_{j=1}^r b_{i_j} \geq m + r.
\]
If $r > 1$, this implies $m + 2 > m + r \geq m + 2$, a contradiction. If $r = 1$, then  
\[
m + 2 > a_i > b_i + m,
\]
another contradiction. Thus $\sum_{j=1}^r (a_{i_j} - b_{i_j}) < m + 1$. Similarly, $\sum_{j=1}^r (b_{i_j} - a_{i_j}) < n + 1$. By Proposition~\ref{P10}, this is equivalent to $\mathcal{C}$ being defined by a regular sequence.  

Next, we verify that the bidegree ordering satisfies the conditions of Remark~\ref{ord}. Suppose, for contradiction, that there exists a subset of indices $i_1 < \cdots < i_\alpha$ and an index $\gamma \notin \{i_1, \dots, i_\alpha\}$ such that  
\[
\sum_{j=1}^\alpha a_{i_j} \geq a_\gamma + m + 1.
\]
Summing over all degrees, we obtain  
\[
m + 2 = \sum_{i=1}^{m+n-1} a_i \geq 2a_\gamma + m + 1.
\]
Since $a_\gamma \geq 1$, this implies $m + 2 \geq m + 3$, which is false. Thus, no such subset exists, and 
the claim follows. We conclude from Corollary \ref{cor12} that $\CC$ is canonical.
\end{proof}

Fix \( p(t) = (2g - 2)t + 1 - g \) and assume \( g \leq (m + 1)(n + 1) \). Let \( F \) denote the contravariant functor 
from the category of schemes to the category of sets defined as follows: for a scheme \( S \),

\[
F(S) = \left\{
\begin{aligned}
&\text{flat families } \mathcal{X} \subset X \times S \text{ of closed subschemes of X, parametrized  by $S$,  whose }  \\
&\text{  }  \text{fibers are canonical complete intersection curves cut out by ample divisors}
\end{aligned}
\right\}.
\]



Let us consider
$(\alpha_1, \beta_1), \dots, (\alpha_s, \beta_s)$ be the distinct bidegrees 
among the pairs $(a_i, b_i)$, grouped as Equation \eqref{o1}. First, we construct a scheme $\h$ equipped with a universal family $\mathcal{X}$. 
To do this, we inductively define, for each $0 \leq r \leq s$, a flat family
\[
p_r \colon \mathcal{X}_r \hookrightarrow X \times {\h}_r \to {\h}_r,
\]
where ${\h}_r$ is an integral, smooth $\mathbb{Z}$-scheme, and the fibers of $p_r$ are ACM subschemes cut out by $m_t$ equations of bidegree $(\alpha_t, \beta_t)$ for $1 \leq t \leq r$.


If \( r = 0 \), we set \(\mathcal{H}_0 = \Spec(\mathbb{Z})\) and \(\mathcal{X}_0 = X\). Proceeding by induction, we assume that \(\mathcal{X}_{r-1}\) and \(\mathcal{H}_{r-1}\) are constructible. By Proposition~\ref{pro21}, the sheaf \(\mathcal{E}_r = p_{r-1,*}\mathcal{O}_{\mathcal{X}_{r-1}}(\alpha_r, \beta_r)\) is locally free and commutes with base change. We may thus define the Grassmannian bundle \(G_r = \mathrm{Gr}(m_r, \mathcal{E}_r^\vee)\) with its natural projection \(\pi_r: G_r \to \mathcal{H}_{r-1}\). This construction yields the following commutative diagram:

\[
\begin{tikzcd}
\pi_{r}^{*}\mathcal{X}_{r-1} \arrow{r}{} \arrow[swap]{d}{p_r} & \mathcal{X}_{r-1} \arrow{d}{p_{r-1}} \\
G_r \arrow{r}{\pi_r} & \mathcal{H}_{r-1}
\end{tikzcd}
\]

On \(G_r\), consider the inclusion of the tautological sheaf \(\mathcal{F}_r \hookrightarrow \pi_r^{*}\mathcal{E}_r\). After a change of basis for \(\pi_r\), this injection may be expressed as
\[ \mathcal{F}_r \hookrightarrow p_{r,*}\mathcal{O}_{\pi_r^{*}\mathcal{X}_{r-1}}(\alpha_r, \beta_r). \]
Pulling back to \(\pi_r^{*}\mathcal{X}_{r-1}\) and applying adjunction, we obtain a morphism of vector bundles
\[ p_r^{*}\mathcal{F}_r \to \mathcal{O}_{\pi_r^{*}\mathcal{X}_{r-1}}(\alpha_r, \beta_r). \]
The zero locus of this morphism defines a closed subscheme \(Z_r \subset \pi_r^{*}\mathcal{X}_{r-1}\).

This step mirrors Olivier's method in \cite{OB12}, but replaces homogeneous forms on \(\mathbb{P}^N\) by bihomogeneous forms on \(X\). For a point \(y \in G_r\) with image \(x = \pi_r(y)\), let \(V\) be the corresponding \(m_r\)-dimensional subspace of
\[ \mathcal{E}_{r,x} = H^{0}\big(\mathcal{X}_{r-1,x}, \mathcal{O}_{\mathcal{X}_{r-1,x}}(\alpha_r, \beta_r)\big). \]
By construction, the fiber \(Z_{r,y}\) coincides with the subscheme of \(\mathcal{X}_{r-1}\) cut out by \(\{F = 0 \mid F \in V\}\), ensuring that \(Z_{r,y}\) has codimension \(\leq m_r\) in \(\mathcal{X}_{r-1}\).

Define \(\mathcal{H}_r \subset G_r\) as the open subscheme where \(Z_{r,y}\) attains minimal dimension. Here, \(Z_{r,y}\) is a complete intersection defined by \(m_i\) equations of bidegree \((\alpha_i, \beta_i)\) for \(1 \leq i \leq r\). We then take \(\mathcal{X}_r\) to be the restriction of \(Z_r\) to \(\pi_r^{-1}(\mathcal{H}_r)\), verifying that \(\mathcal{X}_r\) and \(\mathcal{H}_r\) satisfy all required hypotheses. We gonna call $\h$, the Hilbert scheme of complete intersection.

\begin{Lemma}{\label{bij}}
For any field $k$, $\Phi(k): \h(k) \to F(k)$ is a bijection.
\end{Lemma}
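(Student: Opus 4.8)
The plan is to exhibit mutually inverse maps between $\h(k)$ and $F(k)$. The scheme $\h = \h_s$ was built as a tower of Grassmannian bundles $\h_s \to \h_{s-1} \to \cdots \to \h_0 = \Spec(\Z)$, carrying a universal family $\mathcal{X} = \mathcal{X}_s \subset X \times \h$; the natural map $\Phi(k)\colon \h(k) \to F(k)$ sends a $k$-point $y$ to the fiber $\mathcal{X}_y \subset X_k$. First I would check that $\Phi(k)$ is well-defined, i.e.\ that $\mathcal{X}_y$ is a canonical complete intersection curve cut out by ample divisors with Hilbert polynomial $p(t)$: this is essentially built into the construction of $\h_r$ as the open locus where the zero locus $Z_{r,y}$ attains minimal dimension, together with Proposition \ref{proamp}, which guarantees that such a complete intersection cut out by ample divisors is automatically ACM and canonical.

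For surjectivity, start with a canonical complete intersection curve $\CC \subset X_k$ cut out by ample divisors, say of bidegrees $(\alpha_1,\beta_1),\dots,(\alpha_s,\beta_s)$ with multiplicities $m_1,\dots,m_s$. By Proposition \ref{proamp}, $\CC$ is ACM, so Proposition \ref{prop15} applies: there is a chain $X = X_0 \supset X_1 \supset \cdots \supset X_s = \CC$, where $X_r$ is the unique complete intersection of bidegrees $(\alpha_1,\beta_1),\dots,(\alpha_r,\beta_r)$ containing $\CC$, and the $m_r$-dimensional subspace $V_r \subset H^0(X_{r-1},\OO_{X_{r-1}}(\alpha_r,\beta_r))$ cutting $X_r$ out of $X_{r-1}$ is determined by $\CC$. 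I would then lift $\CC$ to a $k$-point of $\h$ inductively: given the $k$-point $x_{r-1} \in \h_{r-1}$ corresponding to $X_{r-1}$, the fiber $\mathcal{E}_{r,x_{r-1}} = H^0(X_{r-1},\OO_{X_{r-1}}(\alpha_r,\beta_r))$ (Proposition \ref{pro21} ensures this base-change identification), and the subspace $V_r$ defines a $k$-point of the Grassmannian bundle $G_r$ over $x_{r-1}$, which lands in the open subscheme $\h_r$ precisely because $X_r$ has the expected codimension. The resulting point $y \in \h(k) = \h_s(k)$ satisfies $\mathcal{X}_y = \CC$.

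For injectivity, suppose $y, y' \in \h(k)$ have $\mathcal{X}_y = \mathcal{X}_{y'} = \CC$. I would argue by descending induction on $r$ that the images $y_r, y_r'$ in $\h_r(k)$ coincide. At level $r$, the fiber $\mathcal{X}_{r,y_r} = X_r$ is, by Proposition \ref{prop15}(4), the \emph{unique} complete intersection of bidegrees $(\alpha_1,\beta_1),\dots,(\alpha_r,\beta_r)$ containing $\CC$; hence $X_r$ is determined by $\CC$ alone, so $\mathcal{X}_{r,y_r} = \mathcal{X}_{r,y_r'}$. By the inductive hypothesis $y_{r-1} = y_{r-1}'$, so $y_r$ and $y_r'$ are two $k$-points of the same Grassmannian fiber $G_{r,x_{r-1}} = \Gr(m_r, \mathcal{E}_{r,x_{r-1}}^\vee)$; since a point of a Grassmannian is determined by the subspace it parametrizes, and both $y_r, y_r'$ parametrize the same subspace $V_r = H^0(X_{r-1},\mathcal{I}_{X_r, X_{r-1}}(\alpha_r,\beta_r)) \subset \mathcal{E}_{r,x_{r-1}}$, we get $y_r = y_r'$. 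Taking $r = s$ gives $y = y'$.

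The main obstacle is the bookkeeping at the inductive lifting step of surjectivity: one must ensure the base-change compatibilities (so that the abstract fiber $\mathcal{E}_{r,x}$ really is the concrete cohomology group $H^0(X_{r-1},\OO_{X_{r-1}}(\alpha_r,\beta_r))$ of the curve's ambient complete intersection) and that the subspace $V_r$ lands in the \emph{open} locus $\h_r \subset G_r$ rather than merely in $G_r$ — this is where ACM-ness (via Proposition \ref{proamp}) and the "expected codimension" condition are genuinely used. The uniqueness statement Proposition \ref{prop15}(4) is what makes injectivity clean, so once that is in hand the argument is largely formal.
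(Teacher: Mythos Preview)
Your proposal is correct and follows essentially the same approach as the paper: unwind a $k$-point of $\h$ into a tower of pairs $(Y_r,V_r)$, use Proposition~\ref{prop15}(4) for uniqueness of the intermediate $Y_r$ (injectivity), and build the tower from a regular sequence defining $\CC$ (surjectivity). You are slightly more explicit than the paper about well-definedness of $\Phi(k)$ (invoking Proposition~\ref{proamp}) and about the base-change identification $\mathcal{E}_{r,x}\cong H^0(X_{r-1},\OO_{X_{r-1}}(\alpha_r,\beta_r))$ via Proposition~\ref{pro21}, which is a welcome clarification; one cosmetic slip is that your ``descending induction on $r$'' is actually ascending (you pass from $r-1$ to $r$), but the logic is sound.
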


\begin{proof}
Let $k$ be a field. The construction from the previous paragraph shows that a $k$-point of $\mathfrak{h}$ consists of the following data for each $0 \leq r \leq s$:
\begin{itemize}
    \item A complete intersection $Y_{r} \subset X_{k}$ of bidegrees $(\alpha_1, \beta_1), \ldots, (\alpha_{\mu_r}, \beta_{\mu_r})$, and
    \item A subspace $V_{r} \subset H^{0}(Y_{r-1}, \mathcal{O}(\alpha_r, \beta_r))$ of dimension $m_r$,
\end{itemize}
such that $Y_{r}$ is the zero locus in $Y_{r-1}$ of the elements of $V_{r}$. The map $\Phi(k)$ sends this element of ${\h}(k)$ to the complete intersection $Z = Y_{s}$.

\textbf{Injectivity of $\Phi(k)$.}
Suppose $Z$ is a complete intersection over $k$. By Proposition~\ref{prop15}, the subschemes $Y_{r} \subset Z$ are uniquely determined. Moreover, each $V_{r}$ must lie in the kernel of the restriction map
\[ H^{0}(Y_{r-1}, \mathcal{O}(\alpha_r, \beta_r)) \to H^{0}(Z, \mathcal{O}(\alpha_r, \beta_r)), \]
and hence $V_{r}$ equals this kernel. Thus, $Z$ has at most one preimage under $\Phi(k)$.

\textbf{Surjectivity of $\Phi(k)$.}
Let $Z$ be a complete intersection over $k$, defined by a global regular sequence $F_1, \ldots, F_c$. For each $r$, set:
\begin{itemize}
    \item $Y_{r} = \bigcap_{i=1}^{\mu_r} V(F_i) $,
    \item $V_{r} = \langle F_{\mu_{r-1}+1}, \ldots, F_{\mu_r} \rangle$.
\end{itemize}
Then $Y_{r}$ is the vanishing locus of $V_{r}$ in $Y_{r-1}$. The dimension of $V_{r}$ is $m_r$, since any linear dependence among $F_{\mu_{r-1}+1}, \ldots, F_{\mu_r}$ in $H^{0}(Y_{r-1}, \mathcal{O}(\alpha_r, \beta_r))$ would force $Y_{r}$ to have codimension  less than $ m_r$ in $Y_{r-1}$.

\end{proof}

We are interested in canonical curves in the biprojective $X$ and for this, we will make the following considerations: we consider a 
connected, non-bidegenerated curve of genus $g$ cut out by ample divisors, such that its canonical sheaf $\omega_{\CC}$ is the restriction 
of a hyperplane section. More precisely, the 
connectedness of $\CC$ is equivalent to $h^{0}(\CC,\OO_{\CC}) = 1$ or 
$h^1(\mathcal{I}_{\CC,X}) = 0$ (the minimal possible dimension) while the non-bidegeneracy of the curve means that $\CC$ is not 
contained in any projection, i.e. the natural restriction map
\[
H^{0}\big(X,\OO_X(a,b)\big) \longrightarrow H^{0}\big(\CC,\OO_\CC(a,b)\big)
\]
is an isomorphism for $(a,b) \in \{(1,0), (0,1)\}$. This isomorphism imposes additional constraints on the ideal sheaf:
\[
h^1\big(\mathcal{I}_{\CC,X}(a,b)\big) = h^0\big(\mathcal{I}_{\CC,X}(a,b)\big) = 0 \quad \text{for} \quad (a,b) \in \{(1,0), (0,1)\}.
\]

Since we want $g\leq(m+1)(n+1)$, we impose that the natural restriction map
$H^{0}\big(X,\OO_X(1)\big) \longrightarrow H^{0}\big(\CC,\OO_\CC(1)\big)$
to be surjective, i.e. $h^{1}(\mathcal{I}_{\CC,X}(1))=0$. Consequently, a canonical curve corresponds to a point in the open subset
\[
\mathrm{K}^{1} = \left\{ y \in \Hilb^{p(t)}_X \;\Big|\; h^{1}(\mathcal{I}_{y}(1))=h^{i}(\mathcal{I}_{y}) = h^{i}(\mathcal{I}_{y}(1,0)) = h^{i}(\mathcal{I}_{y}(0,1)) = 0, \;   i \leq 1 \right\},
\]
where openness follows from semicontinuity. 

To ensure $\CC$ is generated by ample divisors, it suffices to require
\[
h^{0}(\mathcal{I}_{\CC,X}(a,0)) = h^{0}(\mathcal{I}_{\CC,X}(0,b)) = 0 \quad \forall a \in \{1, \dots, m+1\} \quad \text{and} \quad   \forall b \in \{1, \dots, n+1\}.
\]
Define
\[
\mathrm{K}^0 = \left\{ y \in \mathrm{K}^1 \;\Big|\; h^{0}(\mathcal{I}_y(a,0)) = h^{0}(\mathcal{I}_y(0,b)) = 0, \text{ for all }  1 \leq a \leq m+1,\, 1 \leq b \leq n+1 \right\}
\]
By semicontinuity, $\mathrm{K}^0$ is open in $\mathrm{K}^1$.

Finally, $\CC$ has the additional property that the restriction $\OO_X(1)|_\CC$ is isomorphic to $\omega_\CC$. 
Since the dualizing sheaf of a complete intersection depends only on the bidegree (by Proposition~\ref{K}), 
we may assume $\CC$ is smooth. In this case, the isomorphism $\OO_X(1)|_\CC \cong \omega_\CC$ is equivalent 
to $h^1(\OO_\CC(1)) > 0$, which follows immediately from the Riemann-Roch Theorem and the equality of degrees of the two sheaves.

We set
\[
\mathrm{K} = \left\{ y \in \mathrm{K}^0 \;\Big|\; h^{2}(\mathcal{I}_y(1)) > 0 \right\}.
\]
By semicontinuity, $\mathrm{K}$ is closed in $\mathrm{K}^0$.

 \begin{prop}\label{prop23}
The functor $F$ is representable by an open set of $\operatorname{K}$.
 \end{prop}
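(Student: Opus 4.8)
The plan is to exhibit $F$ as the functor represented by an explicit open subscheme $W$ of $\mathrm{K}$. First I would check that $F$ is a subfunctor of the Hilbert functor $\h_X^{p(t)}$: if $\mathcal{X}\subset X\times S$ lies in $F(S)$, then $\mathcal{X}$ is flat over $S$, and each geometric fibre $\CC$ is a connected curve of genus $g$ with $\om_{\CC}\cong\OO_{\CC}(1)$, so $\deg\OO_{\CC}(1)=\deg\om_{\CC}=2g-2$ and Riemann--Roch gives $\chi(\OO_{\CC}(t))=(2g-2)t+1-g=p(t)$; hence $\mathcal{X}\in\h_X^{p(t)}(S)$. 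Writing $\mathcal{Z}\subset X\times\Hilb_X^{p(t)}$ for the universal family, it therefore suffices to produce an open subscheme $W$ of $\mathrm{K}$ such that, for every locally Noetherian separated $k$-scheme $S$, a morphism $f\colon S\to\Hilb_X^{p(t)}$ factors through $W$ if and only if $f^{*}\mathcal{Z}\in F(S)$.

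Next I would record that each defining property of a fibre is a cohomological condition on its ideal sheaf. For a complete intersection curve $\CC$ cut out by ample divisors, Proposition~\ref{proamp} shows $\CC$ is ACM; Corollary~\ref{cor11} then gives $h^{1}(\mathcal{I}_{\CC}(1))=0$, Corollary~\ref{cor12} gives $h^{0}(\mathcal{I}_{\CC})=h^{1}(\mathcal{I}_{\CC})=0$, $h^{i}(\mathcal{I}_{\CC}(1,0))=h^{i}(\mathcal{I}_{\CC}(0,1))=0$ for $i\le1$, and $h^{0}(\mathcal{I}_{\CC}(a,0))=h^{0}(\mathcal{I}_{\CC}(0,b))=0$ for $1\le a\le m+1$, $1\le b\le n+1$; and from $0\to\mathcal{I}_{\CC}(1)\to\OO_{\P^{r}}(1)\to\OO_{\CC}(1)\to 0$ together with $\om_{\CC}\cong\OO_{\CC}(1)$ one gets $h^{2}(\mathcal{I}_{\CC}(1))\ge h^{1}(\OO_{\CC}(1))=h^{0}(\OO_{\CC})>0$. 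Since $\mathcal{I}_{\mathcal{Z}}$ (suitably twisted) is flat over $\Hilb_X^{p(t)}$, semicontinuity shows that the loci cut out by these conditions are exactly $\mathrm{K}^{1}\supset\mathrm{K}^{0}\supset\mathrm{K}$; in particular $\mathrm{K}$ is locally closed in $\Hilb_X^{p(t)}$ and every member of $F(S)$ induces a morphism $S\to\mathrm{K}$.

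I would then set $W:=\mathrm{K}\cap\iota(\h)$, where $\iota\colon\h\to\Hilb_X^{p(t)}$ is the morphism classifying the universal family $\mathcal{X}$ on the scheme $\h$ constructed above — its fibres are ACM complete intersections of the fixed (ample) bidegrees $(\alpha_1,\beta_1),\dots,(\alpha_s,\beta_s)$, which by the genus computation above have Hilbert polynomial $p(t)$. The crucial point is that $\iota(\h)$ is open in $\Hilb_X^{p(t)}$: it is constructible by Chevalley's theorem, and it is stable under generization by Proposition~\ref{prop20} — along a discrete valuation ring whose special fibre is an ACM complete intersection of these bidegrees, the generic fibre is again one — so it is open; hence $W$ is open in $\mathrm{K}$. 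It remains to check the universal property of $W$. The restriction $\mathcal{Z}|_{W}$ is flat over $W$, and each of its fibres, being a point of $\iota(\h)$, is an ACM complete intersection cut out by ample divisors (Proposition~\ref{prop18} taking care of residue-field extensions), connected and non-bidegenerate by Corollary~\ref{cor12}, and canonical: indeed membership in $\mathrm{K}$ forces $h^{1}(\mathcal{I}_{y}(1))=0$ and $h^{2}(\mathcal{I}_{y}(1))>0$, hence $h^{1}(\OO_{y}(1))>0$, and since $\deg\om_{y}=\deg\OO_{y}(1)$ a degree-zero effective divisor argument — reducing to the smooth case via Proposition~\ref{K} — yields $\om_{y}\cong\OO_{y}(1)$. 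Thus $\mathcal{Z}|_{W}\in F(W)$, and these properties are stable under arbitrary base change. Conversely, if $f^{*}\mathcal{Z}\in F(S)$, then $f$ factors through $\mathrm{K}$ by the previous paragraph, and by the surjectivity part of Lemma~\ref{bij} every point of the image of $f$ lies in $\iota(\h)$; since $\iota(\h)$ is open, $f$ factors through $W=\mathrm{K}\cap\iota(\h)$. This establishes $F\cong W$ with $W$ open in $\mathrm{K}$.

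The step I expect to be the main obstacle is the openness of $\iota(\h)$ in $\Hilb_X^{p(t)}$ — equivalently, that being an ACM complete intersection of the prescribed ample bidegrees is an open condition on the Hilbert scheme. This is exactly what Proposition~\ref{prop20} is designed to provide (the ACM complete-intersection property is inherited by the generic fibre of a one-parameter degeneration), together with constructibility; the subtlety, which does not arise in $\P^{N}$, is that a flat limit of complete intersections in $X$ need not remain a complete intersection once the ACM or ampleness hypotheses are relaxed, so one genuinely must pass through Proposition~\ref{prop20} rather than argue with Hilbert polynomials alone. The remaining bookkeeping — promoting the fibrewise bijection of Lemma~\ref{bij} to the universal property of $W$ over an arbitrary base $S$ — relies only on the uniqueness of the complete-intersection flag (Proposition~\ref{prop15}) and the base-change compatibility of Proposition~\ref{pro21}, both already in hand.
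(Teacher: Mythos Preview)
Your argument is correct and follows essentially the same route as the paper's proof: define the candidate open set as the image of $\h$ in the Hilbert scheme, show it is open by combining Chevalley's theorem (constructibility) with Proposition~\ref{prop20} (stability under generization), and then identify it with $F$ via Lemma~\ref{bij} and Proposition~\ref{prop18}. The only cosmetic difference is that you prove $\iota(\h)$ is open in the full $\Hilb^{p(t)}_X$ and then intersect with $\mathrm{K}$, whereas the paper defines $U$ inside $\mathrm{K}$ from the outset and checks generization there; the content is the same.
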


 \begin{proof}
     Let $U \subset \operatorname{K}$ denote the set of points $x$ for which the corresponding subscheme in $X_{\kappa(x)}$ is a complete intersection over $\kappa(x)$. We claim that $U$ is an open set of $\operatorname{K}$ which represents $F$. For this we need to show that $U$ is constructible and stable under generization.

     Let $x, y \in \operatorname{K}$ such that $y \in \overline{\{{x}\}}$ and $y \in U$. There exists a trait (spectrum of a DVR) $T$ with closed point $s$ and generic point $\eta$, and a morphism $f: T \to \operatorname{K}$ such that $f(s) = y$ and $f(\eta) = x$. Denote by $\mathcal{Y}$ the universal family of $\operatorname{Hilb}_{X}^{p(t)}$, pulling back  of $\mathcal{Y}\cap \operatorname{K}$ over $T$, we obtain a subscheme $Z \subset \operatorname{K} \times T$ flat over $T$. Since $y \in U$, we can apply Proposition \ref{prop20} to it: $Z_{\eta}$ is a complete intersection. Since $Z_{\eta}$ is obtained by scalar extension from the subscheme of $X_{\kappa(x)}$ associated to $x$, Proposition \ref{prop18} shows that $x \in U$. We have shown that $U$ is stable under generization.

     Consider the morphism of functors $i \circ \Phi: \mathcal{H} \to \operatorname{Hilb}^{p(t)}_{X}$; this is a morphism of schemes. If $x \in U$, the surjectivity in Lemma \ref{bij} shows that $x$ is in the image of $i \circ \Phi$. Conversely, let $x$ be a point in the image of $i \circ \Phi$ and $y$ a preimage of $x$. Then $\mathcal{X}_{y}$ is obtained by scalar extension from the subscheme of $X_{\kappa(x)}$ associated to $x$. By Proposition \ref{prop18}, this shows $x \in U$. Thus, $U$ is the set-theoretic image of $i \circ \Phi$. By Chevalley's theorem, $U$ is therefore constructible. This concludes that $U$ is an open set of $\operatorname{K}$.

     From Proposition \ref{prop15}, we know that all complete intersections have the same Hilbert polynomial $p(t)$, so there 
     is an obvious monomorphism of functors $i: F \to \operatorname{Hilb}^{p(t)}_{X}$. By Proposition \ref{prop18}, the morphism 
     of functors $i$ factors as $i: F \to U$. The definition of the morphism of functors $i$ shows that it is a 
     monomorphism. It is immediate that $i: F \to U$ is an epimorphism. Consequently, $i: F \to U$ is an 
     isomorphism of functors: $F$ is indeed representable by $U$.
 \end{proof}
 \begin{cor}
The morphism $\Phi:\h \to U$ induced by the family $pr: \mathcal{X} \to \h$ is an isomorphism. 
 \end{cor}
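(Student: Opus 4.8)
The plan is to show that the morphism $\Phi \colon \h \to U$ is an isomorphism of schemes by combining the functorial bijection on field-valued points from Lemma~\ref{bij} with the fact that both $\h$ and $U$ represent the same functor $F$, and then checking that $\Phi$ is compatible with the universal families. Concretely, Proposition~\ref{prop23} already establishes that $F$ is representable by $U$, so it suffices to show that $\h$ also represents $F$, with $\Phi$ realizing the canonical identification; an isomorphism of representing objects is then forced by Yoneda.

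First I would observe that the family $pr \colon \mathcal{X} \to \h$ constructed as the tower of Grassmannian bundles defines a natural transformation $\Phi \colon \h(-) \to F(-)$: for any locally Noetherian separated $k$-scheme $S$ and any $S$-point of $\h$, pulling back $\mathcal{X}$ gives a flat family in $X \times S$ whose fibers are, by construction (the open loci $\mathcal{H}_r \subset G_r$ where $Z_{r,y}$ has minimal dimension), ACM complete intersection curves cut out by ample divisors with Hilbert polynomial $p(t)$; hence this family lies in $F(S)$. Next I would promote the bijectivity statement of Lemma~\ref{bij} from fields to arbitrary bases: given $\mathcal{X} \subset X \times S$ in $F(S)$, Proposition~\ref{prop15} (items on uniqueness of the intermediate complete intersections $Y_r$ and the description of the kernels $H^0(X,\mathcal{I}_{Y_{r-1},X}(\alpha_r,\beta_r))$) together with Proposition~\ref{pro21} (local freeness and base-change compatibility of the pushforwards $p_{r-1,*}\mathcal{O}_{\mathcal{X}_{r-1}}(\alpha_r,\beta_r)$) lets me recover, canonically and over $S$, the locally free subsheaves $\mathcal{F}_r$ cutting out each stratum, hence a morphism $S \to \h$. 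This is inverse to $\Phi(S)$ because both constructions are uniquely pinned down on each fiber by Proposition~\ref{prop15} and agree there by Lemma~\ref{bij}, so by the base-change compatibility they agree globally.

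Thus $\Phi \colon \h \to F$ is an isomorphism of functors; composing with the isomorphism $i^{-1} \colon U \xrightarrow{\sim} F$ from Proposition~\ref{prop23} and noting that the composite $i \circ \Phi$ is exactly the classifying morphism $\h \to \operatorname{Hilb}_X^{p(t)}$ landing in $U$ used in that proposition, we conclude that $\Phi \colon \h \to U$ is an isomorphism of schemes, and it is precisely the morphism induced by the universal family $pr \colon \mathcal{X} \to \h$.

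The main obstacle I expect is the globalization step: Lemma~\ref{bij} is only stated for fields, and upgrading the inverse construction to an arbitrary locally Noetherian base requires that the intermediate subschemes $Y_r \subset \mathcal{X}$ and the subsheaves $V_r$ vary flatly and commute with base change. This is where Proposition~\ref{pro21} does the real work — it guarantees $p_{r-1,*}\mathcal{O}_{\mathcal{X}_{r-1}}(\alpha_r,\beta_r)$ is locally free with base-change-compatible formation, so the kernel of the restriction $p_{r-1,*}\mathcal{O}_{\mathcal{X}_{r-1}}(\alpha_r,\beta_r) \to p_{r-1,*}\mathcal{O}_{\mathcal{X}}(\alpha_r,\beta_r)$ is a subbundle defining the classifying map to the Grassmannian bundle $G_r$ — but one must check at each inductive stage that the resulting point lands in the open locus $\mathcal{H}_r$ and that the flatness of the whole family $\mathcal{X} \to S$ (an element of $F(S)$, hence flat with ACM fibers) propagates the hypotheses needed to invoke Proposition~\ref{pro21} at the next stage. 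Once this inductive bookkeeping is in place, the rest is a formal Yoneda argument.
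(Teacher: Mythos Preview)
Your approach is essentially that of the paper: both construct the inverse $\Psi \colon U \to \h$ inductively by using Propositions~\ref{pro21} and~\ref{prop15} to extract from the universal family over $U$ the rank-$m_r$ subbundles giving classifying maps into each Grassmannian layer $G_r$, and then check the two composites are identities. The one place the paper is more concrete than your sketch is in verifying $\Psi \circ \Phi = \mathrm{Id}_{\h}$: rather than the heuristic ``fiberwise agreement plus base-change compatibility implies global agreement,'' it checks equality on $k$-points for every field $k$ via Lemma~\ref{bij} and then invokes the reducedness of $\h$ (which is smooth, being an open in a tower of Grassmannian bundles over $\Spec \mathbb{Z}$) to conclude equality of scheme morphisms.
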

 \begin{proof}
   We begin by constructing a morphism $\Psi \colon U \to {\h}$ that will later be shown to be the inverse of $\Phi$. Let $\mathrm{pr}_{U} \colon \mathcal{X} \to U$ denote the universal family. Retaining the previous notations, we inductively construct, for each $0 \leq r \leq s$, a morphism $\Psi_{r} \colon U \to {\h}_{r}$ such that $\mathcal{X}$ is a closed subscheme of $\Psi_{r}^{*}\mathcal{X}_{r}$ inside $X \times U$.

For $r=0$, we take $\Psi_{0}: U \to \h_{0} = \operatorname{Spec}(\mathbb{Z})$ as the structural morphism. Assume $\Psi_{r-1}$ is constructed. By Proposition~\ref{pro21}, the sheaves
\[
\mathrm{pr}_{U,*}\mathcal{O}_{\Psi_{r-1}^{*}\mathcal{X}_{r-1}}(\alpha_r, \beta_r) \quad \text{and} \quad \mathrm{pr}_{U,*}\mathcal{O}_{\mathcal{X}}(\alpha_r, \beta_r)
\]
are vector bundles on $U$ whose formation commutes with base change. Proposition~\ref{prop15} implies that for any $u \in U$, the restriction map
\[
H^{0}\big((\Psi_{r-1}^{*}\mathcal{X}_{r-1})_{u}, \mathcal{O}(\alpha_r, \beta_r)\big) \to H^{0}\big(\mathcal{X}_{u}, \mathcal{O}(\alpha_r, \beta_r)\big)
\]
is surjective with a kernel of dimension $m_r$. Thus, the bundle map
\[
\mathrm{pr}_{U,*}\mathcal{O}_{\Psi_{r-1}^{*}\mathcal{X}_{r-1}}(\alpha_r, \beta_r) \to \mathrm{pr}_{U,*}\mathcal{O}_{\mathcal{X}}(\alpha_r, \beta_r)
\]
is surjective, and its kernel $N$ is a vector bundle of rank $m_r$. By the definition of the Grassmannian $G_r$, this induces a morphism $\Psi_{r} \colon U \to G_r$. Let $F_1, \dots, F_c$ be a global regular sequence defining $\mathcal{X}_{u}$. Then:
\begin{itemize}
    \item $(\Psi_{r-1}^{*}\mathcal{X}_{r-1})_{u} = \bigcap_{i=1}^{\mu_{r-1}}V (F_i)$,
    \item $N_u$ is generated by $F_{\mu_{r-1}+1}, \dots, F_{\mu_r}$,
    \item $(\Psi_{r-1}^{*}{Z}_{r})_{u} = \bigcap_{i=1}^{\mu_r} V(F_i)$ is a complete intersection.
\end{itemize}

This shows that $\Psi_{r}$ factors through $\h_{r}$, yielding $\Psi_{r} \colon U \to {\h}_{r}$. By construction, $\mathcal{X} \subset \Psi_{r}^{*}\mathcal{X}_{r}$, completing the induction. Set $\Psi = \Psi_s$.

We will verify that $\Psi$ is the inverse of $\Phi$. First, we show that $\Phi \circ \Psi: U \to U$ is the identity. Given the functor represented by $U$, it suffices to verify that the two subschemes $\mathcal{Y}$ and $\Psi^{*}\mathcal{X}$ of $X \times U$ coincide. By construction, we know the inclusion $\mathcal{Y} \subset \Psi^{*}\mathcal{X}$; we then denote by $\mathcal{N}$ the ideal sheaf defining $\mathcal{Y}$ in $\Psi^{*}\mathcal{X}$, so that we have a short exact sequence of sheaves on $X \times U$:

$$0 \to \mathcal{N} \to \mathcal{O}_{\Psi^{*}\mathcal{X}} \to \mathcal{O}_{\mathcal{Y}} \to 0.$$

By the flatness of $\mathcal{Y}$ over $U$, the right-hand sheaf is flat over $U$, so by base change to a point $u \in U$, we obtain the following short exact sequence of sheaves on $X_{u}$:

\[
0 \to \mathcal{N}_{u} \to \mathcal{O}_{(\Psi^{*}\mathcal{X})_{u}} \to \mathcal{O}_{\mathcal{Y}_{u}} \to 0.
\]

By Proposition \ref{prop15}, the two right-hand sheaves have the same Hilbert polynomial. By additivity, the Hilbert polynomial of $\mathcal{N}_{u}$ is zero, hence $\mathcal{N}_{u} = 0$. Since this holds for all $u \in U$, Nakayama's lemma shows that $\mathcal{N} = 0$. Thus, $\Psi^{*}\mathcal{X} = \mathcal{Y}$, as desired.

Let $k$ be a field. Since $\Phi \circ \Psi = \text{Id}$ and, by Lemma \ref{bij}, $\Phi(k)$ is a bijection, $\Psi(k)$ is the inverse of $\Phi(k)$. Thus, $\Psi \circ \Phi$ induces the identity on $k$-points for any field $k$. The two morphisms of schemes $\Psi \circ \Phi: \h \to \h$ and $\text{Id}: \h \to \h$ therefore induce the same maps on $k$-points for any field $k$. Since $\h$ is reduced, these two morphisms coincide by \cite[Lemma 2.2.6]{OB12}.

We have shown that $\Psi$ is the inverse of $\Phi$, so $\Phi$ is an isomorphism.
 \end{proof}




\section{Moduli of Canonical Complete Intersection}

The goal of this section is to provide an application to the construction of the moduli space of canonical curves. More precisely, we 
focus on curves studied by Mukai in \cite{MUK95}.

Let $\mathcal{H}$ denote the Hilbert scheme parametrizing (not necessarily smooth) curves of genus~$7$, as in Example~\ref{ex13}. 
Observe that the trigonal case is the only locus of non-canonical curves; nevertheless, the construction 
of $\mathcal{H}$ is entirely determined by the bidegrees. 

\begin{Example}{\label{ult}} 
\textup{Define:
\[
V(r,s) := H^0(X, \OO_X(r,s))^\vee, \quad \P(r,s) := \P\left(H^0(X, \OO_X(r,s))^\vee\right).
\]
Note that, by construction, the Hilbert scheme of complete intersections is an open subscheme of a Grassmannian bundle; for simplicity, we will not distinguish them. 
\begin{enumerate}[label=(\arabic*)]
    \item \textbf{Case}{ $(a_1,b_1)=(1,1)$ and $(a_2,b_2)=(3,3)$ on $\P^1 \times \P^2$:}
    There exists a projective bundle $\pi: \h \to \P(1,1)$ with fibers
    \[
    \pi^{-1}(F) = \P\left(\frac{V(3,3)}{F \cdot V(2,2)}\right), \quad \dim \pi^{-1}(F) = 21,
    \]
    implying $\dim \h= 26$.
    \item \textbf{Case}{ $(a_1,b_1)=(1,1)$ and $(a_2,b_2)=(a_3,b_3)=(1,2)$ on $\P^1 \times \P^3$:}
    We obtain a Grassmannian bundle $\pi: \h \to \P(1,1)$ with fibers
    \[
    \pi^{-1}(F) = G\left(2, \frac{V(1,2)}{F \cdot V(0,1)}\right), \quad \dim \pi^{-1}(F) = 28,
    \]
    yielding $\dim \h = 35$.
\item \textbf{Case}{ $(a_1,b_1)=(a_2,b_2)=(1,1)$ and $(a_3,b_3)=(2,2)$ on $\P^2 \times \P^2$:}
    we have a projective bundle $\pi: \h \to G(2, V(1,1))$ with fibers
    \[
    \pi^{-1}(F,G) = \P\left(\frac{V(2,2)}{F \cdot V(1,1) + G \cdot V(1,1)}\right).
    \]
    Since $\dim(F \cdot V(1,1) + G \cdot V(1,1)) = 17$, we have $\dim \pi^{-1}(F,G) = 17$, and thus $\dim \h = 32$.
\end{enumerate}}
\end{Example}

\begin{Example}\label{g=8}
\textup{The only canonical curve of genus $8$ in Example~\ref{ex13}, cut out by ample divisors, is the pentagonal curve equipped with a non-selfadjoint linear system \(\mathfrak{g}^2_7\). Consequently, 
\[
(a_1, b_1) = (1, 1), \quad (a_2, b_2) = (1, 2), \quad (a_3, b_3) = (2, 1).
\]
This yields a chain of Grassmannians:
\[
\mathfrak{\h} = \mathfrak{\h}_2 \longrightarrow \mathfrak{\h}_1 \longrightarrow \mathfrak{\h}_0 = \mathbb{P}(1, 1) = \mathbb{P}^8,
\]
where:
\begin{itemize}
    \item \(\mathfrak{\h}_1\) is a projective bundle over \(\mathbb{P}^8\) with \(14\)-dimensional fibers.
    \item \(\mathfrak{\h}_2\) is a projective bundle over \(\mathfrak{\h}_1\) with \(14\)-dimensional fibers.
\end{itemize}
Therefore, \(\dim \mathfrak{\h}_1 = 8 + 14 = 22\) and \(\dim \mathfrak{\h}_2 = 22 + 14 = 36\).}
\end{Example}
Throughout this section, the group $G$ denotes either $\rm PGL(m+1) \times \rm PGL(n+1)$ or $\rm SL(m+1) \times \rm SL(n+1)$. The action of $\rm G$ on $X$ induces an action on $\h$.

\begin{Lemma}\label{finite}
Let $Y = \bigcap_{i=1}^c V(F_i)$ be a smooth ACM complete intersection in $X$. Suppose
\[
\sum_{i=1}^c a_i - m - 1 = \sum_{i=1}^c b_i - n - 1:=d > 0.
\]
Then the stabilizer subgroup $\rm Stab(Y) \subset \rm G$ is finite.
\end{Lemma}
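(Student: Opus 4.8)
The plan is to reduce the statement to the classical fact that the automorphism group of a smooth canonically polarized variety is finite, transported through the embedding $X \hookrightarrow \mathbb{P}^r$. First I would observe that by Proposition~\ref{K}, the dualizing sheaf of $Y$ is $\omega_Y \cong \OO_Y(d,d) = \OO_X(1,1)^{\otimes d}|_Y = \OO_{\P^r}(d)|_Y$, so under the Segre embedding $\omega_Y$ is the restriction of $\OO_{\P^r}(d)$; since $Y$ is smooth and $d>0$, $\omega_Y$ is an ample line bundle, i.e. $Y$ is a smooth canonically polarized projective variety. Then $\Aut(Y)$ is finite by the standard theorem (Matsusaka--Mumford, or the fact that $\Aut(Y)$ is a group scheme of finite type whose Lie algebra $H^0(Y, T_Y)$ vanishes because $T_Y$ has negative degree against $\omega_Y$ in the curve case, together with discreteness of the component group via boundedness of polarized varieties).

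Next I would set up the map $\stab(Y) \to \Aut(Y)$. An element $g \in G$ with $g(Y)=Y$ restricts to an automorphism of $Y$; this gives a group homomorphism $\rho: \stab(Y) \to \Aut(Y)$. To conclude finiteness of $\stab(Y)$ it suffices to show $\rho$ has finite kernel and, since $\Aut(Y)$ is finite, that's all that is needed. An element of $\ker\rho$ is a pair $(g_1,g_2) \in \gll(m+1)\times\gll(n+1)$ (or $\sll\times\sll$) acting trivially on $Y$. Here I would use non-bidegeneracy: by Corollary~\ref{cor12} and Proposition~\ref{proamp}, $Y$ is canonical, connected, and non-bidegenerate, so the restriction maps $H^0(X,\OO_X(1,0)) \to H^0(Y,\OO_Y(1,0))$ and $H^0(X,\OO_X(0,1)) \to H^0(Y,\OO_Y(0,1))$ are isomorphisms. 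An automorphism of $X$ fixing $Y$ pointwise therefore acts trivially on $H^0(X,\OO_X(1,0)) = k^{m+1}$ and on $H^0(X,\OO_X(0,1)) = k^{n+1}$, hence $g_1$ and $g_2$ are scalar matrices. In $\gll\times\gll$ the scalars form the kernel of the action on $X$ anyway, so $\ker\rho$ is trivial as a subgroup of $G = \gll(m+1)\times\gll(n+1)$; in the $\sll\times\sll$ case the scalars are the (finite) group $\mu_{m+1}\times\mu_{n+1}$ of roots of unity, so $\ker\rho$ is finite. Either way $\stab(Y)$ injects, up to a finite kernel, into the finite group $\Aut(Y)$, hence is finite.

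The one remaining technical point is the identification $\Aut(Y) \hookrightarrow \Aut(X)$ compatibly with the $G$-action, i.e. checking that the abstract automorphism of $Y$ induced by $g$ is the one I think it is, and conversely that the image of $\rho$ is actually the relevant object; but since I only need the codomain $\Aut(Y)$ to be finite, it is enough to know $\rho$ is a homomorphism with finite kernel, which is immediate. I expect the main obstacle — really the only thing requiring care — to be the invocation of finiteness of $\Aut(Y)$: for curves this is classical once $d>0$ forces $\deg \omega_Y = 2g-2 > 0$ so $g \geq 2$ (the genus-one and genus-zero cases being excluded precisely because then $d \leq 0$), and $\Aut$ of a curve of genus $\geq 2$ is finite. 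If the authors want the statement for higher-dimensional $Y$ as well (the lemma is phrased for general $c$), one should instead cite finiteness of the automorphism group of a smooth variety with ample canonical bundle, which follows from $H^0(Y,T_Y)=0$ (again by the negativity $T_Y \otimes \omega_Y^{\otimes N}$ arguments, or by Lichnerowicz--Kodaira) plus finiteness of the component group; I would cite a standard reference for this rather than reprove it.
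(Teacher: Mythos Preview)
Your proof is correct and follows the same overall strategy as the paper: reduce to finiteness of $\Aut(Y)$ via the natural map $\stab(Y)\to\Aut(Y)$. The differences are in emphasis. The paper simply writes $\stab(Y)\subset\Aut(Y)$ without discussing the kernel, whereas you carefully show the kernel is finite using non-bidegeneracy (Corollary~\ref{cor12}(c) alone suffices for this, since it forces the two projections of $Y$ to span $\P^m$ and $\P^n$; invoking Proposition~\ref{proamp} is unnecessary and imports hypotheses the lemma does not assume). Conversely, where you cite the general theorem on automorphisms of canonically polarized varieties, the paper computes $H^0(Y,T_Y)=0$ directly from the ACM hypothesis: for a smooth curve $T_Y\cong\omega_Y^{-1}\cong\OO_Y(-d)$, the ACM condition gives $h^1(\mathcal{I}_{Y,X}(-d))=0$, and then the exact sequence $0\to\mathcal{I}_{Y,X}(-d)\to\OO_X(-d)\to\OO_Y(-d)\to 0$ forces $H^0(Y,\OO_Y(-d))=0$ since $h^0(\OO_X(-d))=0$. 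That computation is specific to the curve case; your appeal to the classical result covers the higher-dimensional statement as written.
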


\begin{proof}
Since $\mathrm{Stab}(Y) \subset \mathrm{Aut}(Y)$, it suffices to show that $\mathrm{Aut}(Y)$ is finite. From 
\cite[Lemma~3.4]{MO67}, the Lie algebra of $\mathrm{Aut}(Y)$ is isomorphic to $H^0(Y, T_Y)$, where $T_Y$ 
is the tangent sheaf on $Y$. Thus, 
$$\dim \mathrm{Aut}(Y) = 0\Leftrightarrow H^0(Y, T_Y) = 0.$$
Now, by the ACM property, $\omega_Y \cong \mathcal{O}_Y(d)$ and $h^1(\mathcal{I}_{Y,X}(-d)) = 0$. Since $h^0(\mathcal{O}_X(-d)) = 0$, the sheaf $T_Y$ does not have non-zero global sections, yielding $H^0(Y, T_Y) = 0$ as required.
\end{proof}
Let \( X \subset \mathbb{P}^N \) denote the {Segre embedding}. Under the Kronecker product, the group \( G \) is identified with a 
subgroup of \(\operatorname{PGL}(N+1)\) or \(\operatorname{SL}(N+1)\). If \(\Delta\) denotes the singular locus 
of \(\operatorname{Hilb}_{\mathbb{P}^N}^{p(t)}\), then \(\Delta\) is \(\operatorname{PGL}(N+1)\)-invariant. Consequently, 
the open subset \(\mathcal{U} \subset \mathcal{H}\) parametrizing smooth varieties 
is \(G\)-invariant. From this argument and Lemma \ref{finite} we prove the following result. 
\begin{Theorem}
Let $\mathcal{U} \subset \h$ be the locus of smooth canonical curves. Then a good quotient $[\mathcal{U}/\rm G]$ exists.
\end{Theorem}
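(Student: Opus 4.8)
The plan is to realize $[\mathcal{U}/G]$ as a GIT quotient inside the Hilbert scheme of the Segre ambient space $\P^{N}$, $N=(m+1)(n+1)-1$. First I would pass to $G=\rm SL(m+1)\times\rm SL(n+1)$: the two groups in the statement differ only by the central subgroup $\mu_{m+1}\times\mu_{n+1}$, which acts trivially on $X$ and hence on $\h$, so a good quotient for one is a good quotient for the other. Via the Kronecker product $G$ embeds in $\rm SL(N+1)$, and the equivariant inclusion $\h\hookrightarrow\Hilb_{\P^{N}}^{p(t)}$ realizes $\mathcal{U}$ as a $G$-invariant locally closed subscheme: it is open in $\h$, and it is $G$-invariant because the singular locus $\Delta\subset\Hilb_{\P^{N}}^{p(t)}$ is $\rm PGL(N+1)$-invariant, so its complement, and thus $\mathcal{U}$, is $G$-invariant. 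Fixing $d_{0}\gg 0$ so that the $d_{0}$-th Hilbert point gives a closed $\rm SL(N+1)$-equivariant immersion $\Hilb_{\P^{N}}^{p(t)}\hookrightarrow\Gr\!\big(p(d_{0}),H^{0}(\P^{N},\OO(d_{0}))\big)$, pulling back the Plücker bundle yields a $G$-linearized ample line bundle $\mathcal{L}$ on $\h$, hence on $\mathcal{U}$.

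With $(\mathcal{U},\mathcal{L})$ available, the strategy is to show that every point of $\mathcal{U}$ is stable (closed orbit in the semistable locus, finite stabilizer) for the $G$-action, and then invoke Mumford's construction: the stable locus of $\Hilb_{\P^{N}}^{p(t)}$ carries a quasi-projective geometric quotient, in particular a good quotient, and restricting it along the $G$-invariant (hence saturated) open set $\mathcal{U}$ produces $[\mathcal{U}/G]$. Finiteness of the stabilizers is immediate from Lemma~\ref{finite}: if $y\in\mathcal{U}$ corresponds to a smooth canonical complete intersection $\CC_{y}=\bigcap_{i=1}^{c}V(F_{i})$, then $\omega_{\CC_{y}}\cong\OO_{X}(1,1)|_{\CC_{y}}$ together with Proposition~\ref{K} forces $\sum a_{i}-m-1=\sum b_{i}-n-1=1>0$, so $\operatorname{Stab}(y)$ is finite.

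The remaining and genuinely delicate point, which I expect to be the main obstacle, is to show that the orbit $G\cdot y$ is closed in the semistable locus, equivalently that the $G$-action on $\mathcal{U}$ is proper. I would run the valuative criterion of properness: for a DVR $T$ with generic point $\eta$, two flat $T$-families of curves in $\mathcal{U}$ that agree after a translation by some $g_{\eta}\in G(\eta)$ must have $g_{\eta}$ extending over $T$; the key input is that a flat limit in $\Hilb_{X}^{p(t)}$ of smooth ACM complete intersections in $X$ is again an ACM complete intersection of the same bidegrees (Proposition~\ref{prop20}), so the limiting curve still lies in a single orbit, the translation being induced by $G$ since $\CC_{y}$ is non-bidegenerate and canonically polarized. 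Equivalently, one verifies by a Hilbert--Mumford estimate that no one-parameter subgroup of $\rm SL(m+1)\times\rm SL(n+1)$ destabilizes the Hilbert point of a smooth complete intersection in $X$; here the bidegree balance guaranteed by Proposition~\ref{P10} and Remark~\ref{ord}, which bounds how unbalanced the defining equations can be, is exactly what makes the weight computation go through. Once properness (equivalently stability) is established, the conclusion is formal: $G$ is reductive, $\mathcal{U}$ is quasi-projective with the $G$-linearized ample bundle $\mathcal{L}$, and $\mathcal{U}$ is contained in its own stable locus, so GIT produces the geometric, in particular good, quotient $\mathcal{U}/G$ as a quasi-projective scheme. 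As an alternative to the explicit Hilbert--Mumford analysis, once properness of the action is known one may combine Lemma~\ref{finite} (finite inertia) with the Keel--Mori theorem and descent of ampleness to obtain the same good quotient; in stacky language $[\mathcal{U}/G]$ is then a separated Deligne--Mumford stack with quasi-projective coarse moduli space.
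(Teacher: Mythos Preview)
Your proposal is considerably more elaborate than what the paper actually does. The paper's entire argument consists of the two sentences immediately preceding the theorem statement: (i) $\mathcal{U}$ is $G$-invariant, because the singular locus $\Delta\subset\Hilb_{\P^N}^{p(t)}$ is $\operatorname{PGL}(N+1)$-invariant under the Segre embedding, and (ii) stabilizers are finite by Lemma~\ref{finite}. There is no stability analysis, no properness check, and no Hilbert--Mumford estimate; the paper then simply treats $[\mathcal{U}/G]$ as a quotient stack and computes $\dim\mathcal{M}=\dim\mathcal{U}-\dim G$. So your GIT route, while a standard way to attack such a statement, is not the paper's approach and aims at a stronger conclusion than the paper actually claims or uses.

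There is also a concrete gap in your properness sketch. You invoke Proposition~\ref{prop20} to assert that ``a flat limit of smooth ACM complete intersections is again an ACM complete intersection of the same bidegrees'', but Proposition~\ref{prop20} proves the \emph{opposite} implication: if the \emph{special} fiber over a DVR is an ACM complete intersection then so is the \emph{generic} fiber. That is a generization statement, precisely what is used in Proposition~\ref{prop23} to show that the complete-intersection locus is open in $\operatorname{K}$; it says nothing about specializations, and limits of smooth complete intersections in $X$ can certainly leave $\mathcal{U}$. Hence the step you rightly flag as the delicate one is not handled by the tools available in the paper. Note also that your Keel--Mori alternative does not require properness of the action as an input: finite inertia, supplied directly by Lemma~\ref{finite}, already yields a coarse space for the quotient stack $[\mathcal{U}/G]$, and that is much closer to what the paper has in mind.
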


Let $\mathcal{M}:=[\mathcal{U}/\rm G]$ be the quotient stack which is the moduli space of the complete intersections in $X$. We can observe that
\begin{equation}\label{dimM}
\dim \mathcal{M} = \dim \mathcal{U} - \dim \rm G.
\end{equation}
In particular, if we apply the formula \eqref{dimM} to the three cases of the Example \ref{ult} we obtain
\begin{cor} Let $\mathcal{M}$ be the moduli space of canonical curves of genus 7 on $X=\mathbb{P}^m\times\mathbb{P}^n$ that are cut out by ample divisors. 
\begin{enumerate}
    \item  If $X=\mathbb{P}^1\times\mathbb{P}^2$ and the ample divisors having bidegrees $(a_1,b_1)=(1,1), (a_2,b_2)=(3,3)$, then $\dim \mathcal{M} = 15$;
    \item For $X=\mathbb{P}^1\times\mathbb{P}^3$ and the ample divisors having bidegrees $(a_1,b_1)=(1,1), (a_2,b_2)=(a_3,b_3)=(1,2)$, it follows that $\dim \mathcal{M} = 17$;
 \item For $X=\mathbb{P}^2\times\mathbb{P}^2$ and the ample divisors have bidegrees $(a_1,b_1)=(a_2,b_2)=(1,1), (a_3,b_3)=(1,2)$, it follows that $\dim \mathcal{M} = 16$.
\end{enumerate}
\end{cor}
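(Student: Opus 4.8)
The plan is to apply the formula \eqref{dimM}, namely $\dim \mathcal{M} = \dim \mathcal{U} - \dim G$, to each of the three families of Example~\ref{ult}. First I would record the inputs. In every case $X$ is a product of two projective spaces of dimensions summing appropriately, and the relevant group is $G = \operatorname{PGL}(m+1) \times \operatorname{PGL}(n+1)$ (or the $\operatorname{SL}$ version, which has the same dimension), so $\dim G = (m+1)^2 - 1 + (n+1)^2 - 1$. For $\P^1 \times \P^2$ this gives $\dim G = 3 + 8 = 11$; for $\P^1 \times \P^3$ it gives $\dim G = 3 + 15 = 18$; for $\P^2 \times \P^2$ it gives $\dim G = 8 + 8 = 16$. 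Next I would invoke the dimension computations of Example~\ref{ult}: $\dim \h = 26$, $35$, and $32$ respectively. Since for a canonical curve cut out by ample divisors the generic member is smooth, the open locus $\mathcal{U} \subset \h$ of smooth curves is dense, so $\dim \mathcal{U} = \dim \h$ in each case.

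Then the computation is immediate: for $\P^1 \times \P^2$ with bidegrees $(1,1),(3,3)$ we get $\dim \mathcal{M} = 26 - 11 = 15$; for $\P^1 \times \P^3$ with bidegrees $(1,1),(1,2),(1,2)$ we get $\dim \mathcal{M} = 35 - 18 = 17$; and for $\P^2 \times \P^2$ with bidegrees $(1,1),(1,1),(2,2)$ we get $\dim \mathcal{M} = 32 - 16 = 16$. (I note the corollary statement writes the third bidegree as $(1,2)$, but in view of Example~\ref{ult}(3) and Example~\ref{ex13} this should read $(2,2)$; the arithmetic $32-16=16$ is unaffected.)

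Before concluding I would check that the hypotheses of the preceding Theorem and of Lemma~\ref{finite} are met, so that the quotient $\mathcal{M} = [\mathcal{U}/G]$ is genuinely a space of the expected dimension and the subtraction $\dim \mathcal{U} - \dim G$ is valid — this requires that $G$ acts on $\mathcal{U}$ with finite stabilizers, equivalently that $d := \sum a_i - m - 1 = \sum b_i - n - 1 > 0$ for each family. In case (1), $\sum a_i - m - 1 = (1+3) - 1 - 1 = 2 > 0$; in case (2), $(1+1+1) - 1 - 1 = 1 > 0$; in case (3), $(1+1+2) - 2 - 1 = 1 > 0$, so Lemma~\ref{finite} applies throughout and the stabilizers are finite. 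Hence \eqref{dimM} holds verbatim.

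The only genuine subtlety — the "hard part" — is purely bookkeeping: making sure the Grassmannian/projective-bundle fiber dimensions quoted in Example~\ref{ult} are correct, since these rest on the kernel dimensions $\dim(F\cdot V(r-1,s-1))$ etc., which in turn use Proposition~\ref{prop15}(2)–(3) to identify the kernel of the restriction map with the span of the Koszul syzygies. Granting those, the corollary is a one-line substitution into \eqref{dimM} for each of the three cases, and no further obstacle arises.
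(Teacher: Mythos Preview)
Your proposal is correct and follows exactly the paper's approach: the corollary is obtained by applying formula~\eqref{dimM} to the three dimension counts from Example~\ref{ult}, and the paper's own justification is nothing more than the sentence ``if we apply the formula~\eqref{dimM} to the three cases of Example~\ref{ult} we obtain [the corollary].'' Your additional checks (finiteness of stabilizers, the typo $(1,2)\to(2,2)$ in case~(3)) go beyond what the paper spells out; note however that in case~(1) one has $\sum a_i - m - 1 = 2 \neq 1 = \sum b_i - n - 1$, so Lemma~\ref{finite} does not literally apply there---consistent with the paper's own remark that the trigonal case is the one non-canonical locus, yet it still records the arithmetic $26-11=15$.
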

Note that the dimensions of the three moduli spaces in the above corollary coincide with those computed by Mukai in \cite[Table1]{MUK95}. Unlike the case of genus 7, Mukai does not computer the dimension of the moduli space of curves of genus 8 that are complete intersections in $\mathbb{P}^2\times\mathbb{P}^2$. From Example \ref{g=8} and applying Equation \eqref{dimM}, we determine the dimension of the moduli space $\mathcal{M}$ of canonical genus 8 curves cut out by ample divisors of bidegrees $(a_1,b_1)=(1,1), (a_2,b_2)=(1,2)$ and $(a_3,b_3)=(2,1)$. More precisely,
$$\dim\mathcal{M}=\dim\mathcal{H}-\dim G=36-16=20.$$

\begin{Remark}\label{final}
Let $\mathcal{M}^{Mu}$ denote the moduli space as defined by Mukai in ~\cite[Table~1]{MUK95} and let $\mathrm{G}_7^2$ be the irreducible subset of the moduli space of smooth genus~$8$ curves such that every  $C \in \mathrm{G}_7^2$ admits a $\mathfrak{g}_7^2$. Then $\dim \mathrm{G}_7^2 = 20$ and the locus of curves possessing a {non-self-adjoint} linear system~$L$ is an {open} subset of~$\mathrm{G}_7^2$ (characterized by the vanishing of $h^1(  L^{\otimes 2})$), hence $\dim \mathcal{M}^{\mathrm{Mu}}=20$ and thus $\dim\mathcal{M}^{\mathrm{Mu}}=\dim\mathcal{M}$. 
\end{Remark}

The Remark \ref{final} suggests a connection between curves equipped with a fixed linear system and complete intersection curves on $X$. For example, restricting any projection of $X$ to a curve $[\CC] \in \mathcal{M}$ yields a complete linear system $\mathfrak g_d^{r}$, where $r \in \{m, n\}$.
\newpage

\vspace{1cm}

\textbf{Author Contributions} The authors have equal contribution.

\textbf{Data availability} Data sharing not applicable to this article as no data sets were generated or analyzed during the current study.
\section*{Declaration}
\textbf{Conflict of interest} The authors have no relevant financial or non-financial interests to disclose.

\newpage
\bibliographystyle{amsalpha}

\end{document}